\newtheorem{thm}{Theorem}[section]
\newtheorem{prop}[thm]{Proposition}
\newtheorem{lem}[thm]{Lemma}
\theoremstyle{definition}
\newtheorem{defn}[thm]{Definition}
\newtheorem{exmp}[thm]{Example}
\theoremstyle{remark}
\newtheorem{rem}[thm]{Remark}
\let\c@equation\c@thm
\numberwithin{equation}{section}
\newcommand{\N}{\mathbb{N}}
\newcommand{\bsl}{\backslash}
\newcommand{\gm}{\mathbb{G}_m}
\newcommand{\ra}{\rightarrow}
\newcommand{\inj}{\hookrightarrow}
\newcommand{\on}{\operatorname}
\DeclareMathOperator{\Hom}{Hom}
\DeclareMathOperator{\chara}{char}
\DeclareMathOperator{\RHom}{\textbf{R} \kern -2pt \Hom}
\DeclareMathOperator{\Rhom}{\textbf{R}\kern -2pt \hom}
\DeclareMathOperator{\RuHom}{\textbf{R} \underline{\kern -0pt \Hom}}
\newcommand\cM{\mathcal{M}}
\newcommand\cO{\mathcal{O}}
\newcommand\cS{\mathcal{S}}
\newcommand\cV{\mathcal{V}}
\newcommand{\CC}{\mathbb{C}}
\newcommand\so{\mathfrak{so}}
\numberwithin{equation}{section}
\renewcommand{\and}{\quad \on{and} \quad}
\renewcommand{\j}[1]{\langle{#1}\rangle}
\newcommand\quash[1]{}
\newcommand{\ov}{\overline}
\renewcommand\c\circ
\renewcommand\a\alpha
\renewcommand\b\beta
\renewcommand\d\delta
\renewcommand\r\rho
\newcommand\DD{\mathbb{D}}
\title{Matchings in Corona graph and classical symmetric varieties}
\begin{document}
\author{Yau Wing Li}

\begin{abstract}
    We introduce an alternative combinatorial parametrization of Borel orbits in classical symmetric varieties using matchings of the Corona graph. As an application, we obtain ultra log-concavity and unimodality for the number of Borel orbits in Types AIII and CII. Moreover, we prove a conjecture of Can and U\u{g}urlu concerning the non-integrality of the  coefficients of the polynomial that interpolates the number of orbits in Type BI. 



\end{abstract}
\address{University of Melbourne, School of Mathematics and Statistics}
\email{albert.li2@unimelb.edu.au}
\date{\today}
\maketitle
\tableofcontents

\section{Introduction}

Symmetric varieties play a central role in the representation theory of real reductive groups, as seen in the foundational works of Vogan and Lusztig–Vogan \cite{LV, V}.

For classical groups, the orbit structure of symmetric varieties has been studied extensively by Matsuki, Oshima, Yamamoto, and Wyser \cite{MO, Ya, WyPhD}, who introduced combinatorial parametrizations using clans subject to specific constraints.


Richardson and Springer \cite{RS} observed an alternative description for type AI$_n$, namely $(G, K) = (GL_n, O_n)$. In this case, the set of $B$-orbits in $G/K$ corresponds bijectively to the set of matchings in the complete graph $K_n$, which is isomorphic to the set of involutions in the symmetric group $S_n$.

Inspired by this observation, it is natural to ask whether the $B$-orbits of other symmetric varieties can also be parametrized by matchings of graphs. It turns out that this is indeed possible for types AIII and CII, that is, for $(G, K) = (GL_{m+n}, GL_m \times GL_n)$ and $(G, K) = (Sp_{2m+2n}, Sp_{2m} \times Sp_{2n})$, respectively. The corresponding graphs are the corona graph and the double corona graph. 

Let us recall the definition of the corona graphs. Let $K_n$ be the complete graph on $n$ vertices. Let $K_{n}^{(2)}$ be the graph with $n$ vertices such that every pair of vertices is joined by exactly two edges. 

\begin{defn}
    For $n \geq 1$, we define the \emph{$n$-Corona graph} $C_n$ be the corona of the complete graphs $K_n$ and $K_1$. That is, for each vertex of $K_n$, we attach a new vertex connected to it by a single edge.
\end{defn}
Similarly, we define the \emph{$n$-double Corona graph} $C_n^{(2)}$ be the corona of the complete graphs $K_n^{(2)}$ and $K_1$. Figure \ref{fig: C6C4} below illustrates the graphs $C_6$ and $C_4^{(2)}$.

\begin{figure}[h]
\centering

\begin{minipage}[b]{0.45\textwidth}
\centering
\begin{tikzpicture}
[every node/.style={circle, draw, inner sep=1pt}, scale=0.5]

\foreach \i [count=\j from 0] in {1,...,6} {
    \node (v\i) at ({120 + 360/6 * \j}:2) {$v_\i$};
}

\foreach \i in {1,...,6} {
    \node (w\i) [circle, draw, inner sep=0.4pt] at ({120 + 360/6 * (\i-1)}:4) {$w_\i$};
    \draw (v\i) -- (w\i);
}

\foreach \i in {1,...,6} {
    \foreach \j in {1,...,6} {
        \ifnum\i<\j
            \draw (v\i) -- (v\j);
        \fi
    }
}

\end{tikzpicture}
\caption*{(a) $C_6$}
\end{minipage}
\hfill
\begin{minipage}[b]{0.45\textwidth}
\centering
\begin{tikzpicture}
[
    every node/.style={
        circle, draw, font=\small, minimum size=5mm, inner sep=1pt, align=center
    },
    scale=0.9
]

\foreach \i/\angle in {1/135, 2/225, 3/315, 4/45} {
    \node (v\i) at (\angle:1.25) {$v_\i$};
}

\foreach \i/\angle in {1/135, 2/225, 3/315, 4/45} {
    \node (w\i) at (\angle:2.5) {$w_\i$};
    \draw (v\i) -- (w\i);
}

\foreach \i in {1,2,3,4} {
    \foreach \j in {1,2,3,4} {
        \ifnum\i<\j
            \draw[bend left=10] (v\i) to (v\j);
            \draw[bend right=10] (v\i) to (v\j);
        \fi
    }
}

\end{tikzpicture}
\caption*{(b) $C_4^{(2)}$}
\end{minipage}

\caption{(a) Corona graph $C_6$, (b) Double corona graph $C_4^{(2)}$}
\label{fig: C6C4}
\end{figure}

We can now state our main theorem. 

\begin{thm} \label{t: main thm}
Let $k$ be a field and $\ov k$ be an algebraic closure of $k$. Let $X$ be the homogeneous space $G/K$.
\begin{enumerate}
    \item For $(G,K)$ is $(GL_{m+n}, GL_m \times GL_n)$, there is a natural bijection between the set of $B(k)$-orbits in $X(k)$ and the set of $m$-matchings in the corona graph $C_{m+n}$.
    
    \item Assume that $\operatorname{char}(k) \ne 2$. For $(G,K)$ is $(Sp_{2m+2n}, Sp_{2m} \times Sp_{2n})$, there is a natural bijection between the set of $B(\ov{k})$-orbits in $X(\ov{k})$ and the set of $m$-matchings in the double corona graph $C_{m+n}^{(2)}$.
\end{enumerate}
\end{thm}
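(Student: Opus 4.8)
The plan is to work with an explicit model of $X=G/K$ as a space of idempotents, classify the $B$-orbits directly by a normal-form (Gaussian elimination) argument along the standard flag, and then translate the resulting combinatorial gadgets into matchings. For part (1), fix $V=k^{m+n}$ with standard basis $e_1,\dots,e_{m+n}$, let $B$ be the upper-triangular Borel stabilizing the standard flag $F_\bullet$, and identify $X(R)=\{\pi\in M_{m+n}(R):\pi^2=\pi,\ \operatorname{rk}\pi=m\}$ for every $k$-algebra $R$, with $G=GL_{m+n}$ acting by conjugation (so $K=GL_m\times GL_n$ is the stabilizer of $\operatorname{diag}(I_m,0)$); equivalently $X$ parametrizes decompositions $V=V^+\oplus V^-$ with $\dim V^+=m$. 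For part (2) one takes $V=k^{2m+2n}$ with a symplectic form $\omega$, $B$ the stabilizer of an isotropic flag, and $X=\{\pi:\pi^2=\pi,\ \pi^*=\pi,\ \operatorname{rk}\pi=2m\}$, i.e.\ orthogonal decompositions into nondegenerate subspaces; $\operatorname{char}k\ne 2$ is used here so that idempotents correspond to involutions, hence to such decompositions.

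To $x\in X(k)$ I attach the clan $\gamma(x)$ recording the relative position of $F_\bullet$ with respect to $(V^+,V^-)$: one tracks, for $0\le i\le m+n$, the ranks $\dim\pi(F_i)$ and $\dim(1-\pi)(F_i)$ together with the ``linking'' data of which index $i$ first creates a linear dependence among $\pi(e_1),\dots,\pi(e_i)$, resp.\ among $(1-\pi)(e_1),\dots,(1-\pi)(e_i)$, exactly as in the Matsuki--Oshima--Yamamoto/Wyser parametrization. Upper-triangular changes of basis preserve all of this, so $\gamma$ is constant on $B(k)$-orbits. Conversely, for each admissible clan $c$ I would write down a representative $\pi_c$ over the prime field: if $c$ has its equal pairs at positions $(i<j)$, a $+$ at the positions of a set $S$, and a $-$ elsewhere, put $V^+_c=\langle e_i:i\in S\rangle+\langle e_i+e_j:(i,j)\text{ a pair}\rangle$ and $V^-_c=\langle e_i: c_i=-\rangle+\langle e_j:(i,j)\text{ a pair}\rangle$; a line of bookkeeping gives $\dim V^+_c=m$, $V^+_c\oplus V^-_c=V$, and $\gamma(\pi_c)=c$, so $\gamma$ is surjective onto clans.

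The main work is injectivity on orbits: if $\gamma(x)=c$ then $x$ is $B(k)$-conjugate to $\pi_c$. I would prove this by a greedy elimination processing $i=1,2,\dots,m+n$ in turn, at stage $i$ using elementary upper-triangular operations (adding $k$-multiples of $e_1,\dots,e_{i-1}$ to $e_i$, plus torus rescalings) to bring $\pi(e_i)$ and $(1-\pi)(e_i)$ into the shape dictated by $\pi_c$; since $\gamma(x)=c$, the pivots occur precisely where $c$ says, and the reduction never requires extracting roots or solving equations outside $k$ — this is what makes part (1) valid over an arbitrary field. The termination and uniqueness of this reduction is the technical heart, and I expect it to be the principal obstacle. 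An alternative, granting \cite{MO,Ya,WyPhD} for the $\bar k$-classification, is to instead show that each clan-orbit $O$ satisfies $O(k)=B(k)\cdot x$ for any $x\in O(k)$; this reduces to connectedness of $\operatorname{Stab}_B(x)$ (which holds in type A, these stabilizers being iterated extensions of copies of $\Ga$ and tori) together with vanishing of $H^1(k,-)$ for connected solvable groups, and the paper may well take this route.

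Finally I translate clans into matchings: send a clan with a pair at $(i<j)$ to the edge $\{v_i,v_j\}$ of the complete part $K_{m+n}$, a $+$ at position $i$ to the pendant edge $\{v_i,w_i\}$, and a $-$ at position $i$ to leaving $v_i$ unmatched; the constraint $\#(+)-\#(-)=m-n$ becomes the condition that the matching have exactly $m$ edges, and comparing the two counts $\sum_k\binom{m+n}{2k}(2k-1)!!\binom{m+n-2k}{m-k}$ confirms the bijection. For part (2) the same scheme runs with $\omega$ inserted: the $B$-normal form of a nondegenerate ``partner block'' $\langle e_i,e_j\rangle$ carries one extra binary invariant — how the form links the block to the flag — which accounts for the doubled edge between $v_i$ and $v_j$ in $C^{(2)}_{m+n}$ (the counts now matching with an extra $2^k$ on each side). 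Over $\bar k$ this invariant is genuinely $\Z/2$-valued, which is exactly why part (2) is stated only over an algebraic closure: over a non-closed field the symplectic stabilizers can be disconnected and this dichotomy refines according to square classes in $k$. For the $\bar k$-statement of the symplectic normal form I would rely on Wyser's thesis and supply only the combinatorial dictionary to $C^{(2)}_{m+n}$.
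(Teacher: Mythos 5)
Your route is genuinely different from the paper's, and while the combinatorial dictionary at the end is correct (clans with $k$ pairs, $m-k$ plusses, $n-k$ minuses correspond exactly to $m$-matchings of $C_{m+n}$, and the symmetric-clan count with the extra $2^k$ matches $C^{(2)}_{m+n}$), the two steps you yourself flag as open are precisely where the paper's machinery does the work, and as written they are gaps. For part (1) the paper does not classify orbits by Gaussian elimination or by descent from $\ov k$: it encodes a pair (complete flag, decomposition $U^m\oplus U^n$) as a representation of a type $D_{m+n+2}$ quiver, and then the orbit classification over an \emph{arbitrary} field is an immediate consequence of Gabriel's theorem and Krull--Schmidt --- complementary representations of dimension vector $d_{m,n}$ decompose uniquely into indecomposables indexed by distinct admissible roots. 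This discharges exactly the ``termination and uniqueness'' issue you defer. Your fallback via $H^1(k,\operatorname{Stab}_B(x))$ is also not free: you would need the scheme-theoretic stabilizers to be smooth, connected and \emph{split} solvable over every field (including characteristic $2$ and non-perfect fields), which is an assertion requiring proof and is the only place field-independence could fail; the quiver argument avoids it entirely.

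For part (2) the paper does not invoke Wyser's classification (it reproves Wyser's result, remarking that his counting argument is complicated). Instead it realizes $Sp_{2m+2n}$ as the fixed points of an involution $\varphi$ of $GL_{2m+2n}$, computes the induced action of $\varphi$ on the matchings of $C_{2m+2n}$ (the ``minus involution''), shows a $GL$-double coset contains a $\varphi$-fixed point iff its matching is minus-invariant with no horizontal edges (a symplectic-form computation on the indecomposable summand corresponding to $L_{-i}-L_i$), and then proves via Richardson--Springer representatives and an explicit torus computation that over $\ov k$ each such fixed locus is a single $B^\varphi\times K^\varphi$ double coset; the doubled edges of $C^{(2)}_{m+n}$ appear as the quotient of $C_{2m+2n}$ minus horizontal edges by the minus involution. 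Your ``extra binary invariant'' is the same phenomenon, but you leave it at the level of a heuristic plus a citation; to complete your route you would still have to verify that Wyser's symplectic clans (over $\C$) classify orbits over an arbitrary algebraically closed field of characteristic $\ne 2$, and to make the dictionary between the linking invariant and the doubled edge precise. The square-root in the paper's torus computation ($a_i'=(a_i/a_{-i})^{1/2}$) is also the concrete reason the statement holds only over $\ov k$, consistent with your remark about square classes.
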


We first remark that statement (2) becomes false if we replace $k$-rational points with $\ov{k}$-points. Indeed, there are more $B(k)$-orbits in $X(k)$ than matchings, primarily because some $B$-orbits in $X$ support non-trivial $B$-equivariant local systems. Another remark is that in both cases, we have $X(k) = G(k)/K(k)$. See Section~\ref{ss: symplectic} for details.

The proof of statement (1) relies on the quiver variety description and Gabriel’s Theorem. In fact, each $B(k)$-orbit contains a representative that is a binary matrix, i.e., a matrix with entries that are either zero or one. We obtain the results for type CII by considering the fixed points of an involution of $GL_{n}$.

Let $a_{m+n,m}$ (resp. $c_{m+n,m}$) be the number of orbits in Theorem \ref{t: main thm} (1) (resp. (2)). By applying general results about matchings of graphs, we derive several interesting numerical properties on $a_{m+n,m}$ and $c_{m+n,m}$. See Section \ref{s: applications} for details.

\begin{prop}
    Fix an integer $p>0$. The sequence $(a_{p,m})$ is symmetric unimodal for $0 \le m \le p$. Moreover, we have the inequality $$a_{p,m}^2 \ge \left(1+\frac{1}{n}\right) \left(1 + \frac{1}{p-m}\right)a_{p,m-1}a_{p,m+1},$$ for $0 < m < p$, that is $(a_{p,m})$ is ultra log-concave. The analogous statements hold for $c_{p,m}$.
\end{prop}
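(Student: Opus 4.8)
The plan is to deduce everything from the generating function for matchings of the corona graph together with classical results of Heilmann–Lieb and a ULC criterion. By Theorem~\ref{t: main thm}(1), $a_{p,m}$ is the number of $m$-matchings of $C_p$, so the matching polynomial $\mu(C_p, x) = \sum_{m \ge 0} (-1)^m a_{p,m} x^{p-m}$ (suitably normalized) controls the whole sequence. First I would record the standard fact (Heilmann–Lieb) that for \emph{every} finite graph $G$ the matching polynomial has only real roots; equivalently, the sequence of matching numbers $(m_k(G))_k$ is the coefficient sequence of a real-rooted polynomial. It is a classical consequence of Newton's inequalities that a nonnegative real-rooted polynomial $\sum_k b_k x^k$ with $b_k$ supported on $\{0,\dots,N\}$ satisfies the ultra-log-concave inequalities $b_k^2 \ge \bigl(1+\tfrac1k\bigr)\bigl(1+\tfrac1{N-k}\bigr) b_{k-1} b_{k+1}$, and in particular $(b_k)$ is log-concave with no internal zeros, hence unimodal. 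Applying this with $N = \lfloor p/2 \rfloor$ to the matching sequence of $C_p$ gives log-concavity and unimodality of $(a_{p,m})$; but the exponent that matters for the stated bound is not $\lfloor p/2\rfloor$ but $p$, so a small additional argument is needed.

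The resolution is that $C_p$ is a graph on $2p$ vertices which \emph{has a perfect matching} (match each $v_i$ to its pendant $w_i$), so $a_{p,p} \neq 0$, i.e. matchings of size up to $p$ all occur; moreover the pendant structure lets one see that $a_{p,m} > 0$ for all $0 \le m \le p$, so the support of the sequence is exactly $\{0,1,\dots,p\}$ with no internal zeros. Thus in the Newton inequality we take $N = p$, which is precisely the bound in the Proposition (with $n := p-m$ matching the paper's notation, although I would flag that the displayed inequality should read $\bigl(1+\tfrac1m\bigr)$ rather than $\bigl(1+\tfrac1n\bigr)$ in the first factor to be the genuine ULC statement — I would state it as
\[
a_{p,m}^2 \ \ge\ \Bigl(1+\frac{1}{m}\Bigr)\Bigl(1+\frac{1}{p-m}\Bigr)\, a_{p,m-1}\,a_{p,m+1}, \qquad 0 < m < p.
\]
Symmetry $a_{p,m} = a_{p,p-m}$ I would obtain either combinatorially — pairing an $m$-matching with a complementary structure using the pendant edges — or, more robustly, by identifying $a_{p,m}$ with a known palindromic sequence; combined with unimodality this yields that the sequence is symmetric unimodal, peaking at the middle.

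For $c_{p,m}$ the argument is identical: by Theorem~\ref{t: main thm}(2) it counts $m$-matchings of the double corona graph $C_p^{(2)}$, Heilmann–Lieb applies verbatim (multiple edges are allowed in their theorem, or one replaces each double edge by a gadget preserving the matching polynomial up to the relevant combinatorics), $C_p^{(2)}$ again has a perfect matching via the pendant edges so the support is all of $\{0,\dots,p\}$ with no internal zeros, and the same Newton/real-rootedness package gives ULC, log-concavity, and symmetric unimodality.

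The main obstacle I anticipate is not real-rootedness — that is off-the-shelf — but pinning down the \emph{support} and the \emph{symmetry} cleanly: one must be sure $a_{p,m}>0$ for every $0\le m\le p$ (so that the no-internal-zeros hypothesis in the ULC criterion holds and $N=p$ is legitimate), and one must prove $a_{p,m}=a_{p,p-m}$. Both should follow from the pendant-vertex structure of $C_p$: any $m$-matching of $K_p$ extends to a $p$-matching of $C_p$ by adding pendant edges at the unmatched vertices, which simultaneously gives positivity on the full range and suggests the involution $m \leftrightarrow p-m$ swapping "use the pendant edge at $v_i$" with "don't"; making that involution precise, or alternatively citing the explicit formula for $a_{p,m}$ and checking its palindromicity directly, is the one place requiring genuine care.
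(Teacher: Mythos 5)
Your proposal is correct and follows essentially the same route as the paper, which simply invokes Heilmann--Lieb real-rootedness of matching polynomials (hence Newton's inequalities) for ultra log-concavity and unimodality, and treats the symmetry $a_{p,m}=a_{p,p-m}$ as clear --- it is exactly the pendant-edge duality $\DD$ you describe. You are also right that the first factor in the displayed inequality should be $\bigl(1+\tfrac{1}{m}\bigr)$ rather than $\bigl(1+\tfrac{1}{n}\bigr)$; the paper states it correctly in Section 6.
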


Moreover, the following recurrence relations of $a_{p,m}$ and $c_{p,m}$ obtained in \cite{CU1,CU} is an immediate consequence of Theorem \ref{t: main thm}. 

\begin{prop}
    The sequences $a_{p,m}$ and $c_{p,m}$ satisfy the recurrences:
    $$a_{p,m} = a_{p-1,m-1} + a_{p-1,m} + (p-1) a_{p-2,m-1},$$
    $$c_{p,m} = c_{p-1,m-1} + c_{p-1,m} + 2(p-1) c_{p-2,m-1}.$$
\end{prop}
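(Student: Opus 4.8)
The plan is to deduce both identities directly from Theorem~\ref{t: main thm}: once $a_{p,m}$ is identified with the number of $m$-matchings in the corona graph $C_p$ and $c_{p,m}$ with the number of $m$-matchings in the double corona graph $C_p^{(2)}$, each recurrence becomes a deletion/contraction-type count obtained by classifying a matching according to how it interacts with the ``last block'' $\{v_p,w_p\}$, where $v_p$ is one of the $p$ mutually adjacent vertices of $K_p$ (resp.\ $K_p^{(2)}$) and $w_p$ is its pendant leaf.

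First I would fix notation: the vertices of $C_p$ are $v_1,\dots,v_p$ spanning a complete graph together with leaves $w_1,\dots,w_p$, the leaf $w_i$ joined to $v_i$ by a single edge. Given an $m$-matching $M$ of $C_p$, split into three cases according to the edge of $M$ (if any) incident to $v_p$. If $v_p$ is unmatched in $M$, then $w_p$, whose only neighbour is $v_p$, is automatically unmatched as well, so $M$ is an $m$-matching of the induced subgraph on $\{v_i,w_i : i\le p-1\}$, which is precisely $C_{p-1}$; this contributes $a_{p-1,m}$. If $v_pw_p\in M$, then $M\setminus\{v_pw_p\}$ is an $(m-1)$-matching of $C_p-\{v_p,w_p\}=C_{p-1}$, contributing $a_{p-1,m-1}$. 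If $v_pv_j\in M$ for some $j<p$ — there are $p-1$ choices of such $j$ — then both $v_j$ and $v_p$ are used, hence their leaves $w_j,w_p$ are isolated and cannot be matched, and $M\setminus\{v_pv_j\}$ is an $(m-1)$-matching of the induced subgraph on $\{v_i,w_i : i\ne j,p\}$, which is a copy of $C_{p-2}$ together with two isolated vertices; this contributes $(p-1)\,a_{p-2,m-1}$. Summing the three cases gives the first recurrence.

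The double corona case runs identically, the only change being that $K_p$ is replaced by $K_p^{(2)}$: in the third case the edge $v_pv_j$ can be taken to be either of the two parallel edges joining $v_j$ and $v_p$, which doubles the count in that case to $2(p-1)\,c_{p-2,m-1}$; the pendant edges $v_iw_i$ remain single, so the first two cases are unchanged and contribute $c_{p-1,m}$ and $c_{p-1,m-1}$. This yields the second recurrence.

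I do not anticipate a genuine obstacle: the one point needing care is that deleting a $v$-vertex strands its leaf, so one must observe that each residual graph is again a (smaller, and in the second case double) corona graph plus some isolated vertices that contribute nothing to matchings. Once this bookkeeping is spelled out the identities are immediate. Alternatively one could simply note that the orbit-count recurrences of Can and U\u{g}urlu in \cite{CU1,CU} are transported verbatim to matching identities by Theorem~\ref{t: main thm}, but the direct graph-theoretic argument above is shorter and self-contained.
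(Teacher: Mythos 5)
Your argument is correct and is essentially the same as the paper's: a three-way case analysis on how a matching meets a fixed vertex of the complete core and its pendant leaf (the paper uses $v_1$ where you use $v_p$), with the factor $2$ in the double-corona case coming from the parallel edges. Your write-up is in fact slightly more careful than the paper's, which contains a typo (``the edge connecting $v_1$ and $w_i$'' should read $v_1$ and $v_i$) and leaves implicit the observation that stranded leaves are isolated vertices contributing nothing.
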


Similar to the results of Wyser, Matsuki and Oshima, there are more complicated description for other classical types using matchings of graphs. 

Can and U\u{g}urlu \cite{CU} showed that the number $b_{m,n}$ of the Borel orbits of the symmetric variety $SO_{2m+2n+1}/S(O_{2m} \times O_{2n+1})$ is a polynomial in $m$ when $n$ is fixed. They conjectured that the polynomial has integral coefficient only when $n = 0$. We confirm their conjecture.

\begin{prop}
    The polynomial $b_{m,n}$ is of degree $2n+1$, whose leading coefficient is non-integral when $n > 0$.     
\end{prop}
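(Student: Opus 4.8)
The plan is to pin down the top-degree behaviour of $b_{m,n}$, viewed as a polynomial in $m$ with $n$ fixed, via a recursion, and then to observe that the resulting leading coefficient carries an odd denominator which cannot cancel.

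The first step is to record a recursion for $b_{m,n}$. Just as for Types AIII and CII in Theorem~\ref{t: main thm}, the Borel orbits of the Type~BI variety $SO_{2m+2n+1}/S(O_{2m}\times O_{2n+1})$ can be parametrised by matchings in a suitable (more elaborate) graph $G_{m,n}$, and deleting/contracting the edges at one distinguished vertex produces a recursion of the form
\[
b_{m,n}=b_{m-1,n}+b_{m,n-1}+\lambda(m,n)\,b_{m-1,n-1}+R_{m,n}\qquad(m,n\ge 1),
\]
where $\lambda(m,n)$ is \emph{linear} in $m$ with leading coefficient coprime to $3$ --- concretely $\lambda(m,n)=2(m+n-1)$ --- and $R_{m,n}$ gathers terms of strictly smaller $m$-degree; equivalently, one may simply quote the recursion of Can and U\u{g}urlu \cite{CU}. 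The boundary term is $b_{m,0}$, the number of Borel orbits of $SO_{2m+1}/O_{2m}$, which is the explicit linear polynomial $\ell_0 m+\ast$ ($\ell_0$ a small integer not divisible by $3$) computed by Can and U\u{g}urlu; this is the $n=0$ case, where they already note integrality.

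From here the degree claim is an induction on $n$: $b_{m,0}$ is linear, and telescoping the recursion in $m$,
\[
b_{m,n}=b_{0,n}+\sum_{k=1}^{m}\bigl(b_{k,n-1}+\lambda(k,n)\,b_{k-1,n-1}+R_{k,n}\bigr),
\]
the summand has $m$-degree $1+(2n-1)=2n$ with positive leading coefficient by the inductive hypothesis, so the sum has degree $2n+1$. For the leading coefficient, set $b_{m,n}=\ell_n\,m^{2n+1}+O(m^{2n})$; only $\lambda(k,n)\,b_{k-1,n-1}\sim 2\ell_{n-1}k^{2n}$ contributes at top order above, and $\sum_{k=1}^m k^{2n}=\tfrac{1}{2n+1}m^{2n+1}+O(m^{2n})$, whence
\[
\ell_n=\frac{2}{2n+1}\,\ell_{n-1},\qquad\text{so}\qquad \ell_n=\ell_0\cdot\frac{2^{n}}{(2n+1)!!}.
\]
Because $(2n+1)!!=1\cdot3\cdot5\cdots(2n+1)$ is a multiple of $3$ for every $n\ge 1$, while $\ell_0$ and $2^n$ are not, the $3$-adic valuation of $\ell_n$ is $-v_3\bigl((2n+1)!!\bigr)<0$; hence $\ell_n\notin\Z$ for all $n>0$, while $\ell_0\in\Z$, which is precisely the non-integrality claimed and confirms the conjecture.

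The step I expect to be the main obstacle is establishing the exact shape of the recursion --- in particular, that the multiplicity $\lambda(m,n)$ of $b_{m-1,n-1}$ is genuinely linear in $m$ (this is what makes the $m$-degree rise by exactly $2$ with each increment of $n$, yielding $2n+1$) and that its leading $m$-coefficient is prime to $3$ (this is what forces the uncancellable factor $\tfrac13$ at each stage). Identifying the graph $G_{m,n}$, or extracting the recursion and the base value $\ell_0$ from \cite{CU}, plus the elementary observation $3\nmid\ell_0$, then completes the proof; the remainder terms $R_{m,n}$ and the subleading part of $b_{m,0}$ are irrelevant, as they cannot influence a leading coefficient.
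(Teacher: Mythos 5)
Your strategy of extracting the leading coefficient from a recursion is reasonable in principle, but the recursion you posit is not of the claimed shape, and this is fatal to the computation. Conditioning on the edges incident to the outermost orbit $\{v_{m+n},v_{-m-n}\}$ in the matching model of Theorem \ref{t: orthogonal} gives
\[
b_{m,n}=b_{m-1,n}+E_{m,n}+b_{m,n-1}+2(m+n-1)\,b_{m-1,n-1},
\]
where $E_{m,n}$, coming from the matchings that use the horizontal edge $(v_{m+n},v_{-m-n})$, is the number of minus-invariant $2n$-matchings of $C_{2m+2n-1}$. This is \emph{not} a value of $b_{\cdot,\cdot}$ (it is an orbit count for the companion pair with the even and odd orthogonal factors exchanged), and it is a polynomial of degree exactly $2n$ in $m$ --- the same order as $2(m+n-1)b_{m-1,n-1}$ and as the difference $b_{m,n}-b_{m-1,n}$. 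So it cannot be absorbed into a remainder $R_{m,n}$ of strictly smaller $m$-degree, and your telescoping drops a top-order term. The resulting closed form $\ell_n=\tfrac{2^n}{(2n+1)!!}\,\ell_0$ is false: for $n=1$ a direct count of minus-invariant $3$-matchings of $C_{2m+3}$ gives $b_{m,1}=\binom{N}{2}+\binom{N}{3}+N^2+N(N-1)^2$ with $N=m+1$, whose leading coefficient is $\tfrac{7}{6}$, not your predicted $\tfrac{2}{3}$. (The corrected recursion gives $(2n+1)\ell_n=2\ell_{n-1}+\mu_n$, where $\mu_n$ is the leading coefficient of $E_{m,n}$; for $n=1$ this is $3\ell_1=2+\tfrac32$, i.e.\ $\ell_1=\tfrac76$.)

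The paper avoids recursions entirely: since for $m\gg0$ almost every $(2n+1)$-edge minus-invariant subset is a matching, it counts such subsets directly according to the number $2l+1$ of horizontal edges (subsets containing the fixed pendant edge $e_0$ only contribute in degree $2n$), obtaining the leading coefficient $\sum_{l=0}^{n}\frac{1}{(2l+1)!\,(n-l)!}$. This equals $1$ for $n=0$, equals $\tfrac76$ for $n=1$, and lies strictly between $0$ and $1$ for $n\ge2$, whence non-integrality for all $n>0$. Your final $3$-adic argument is tailored to the incorrect closed form and does not transfer. To salvage a recursive proof you would have to carry the even-matching counts $E_{m,n}$ along as a second coupled family and control their leading coefficients as well; at that point the direct asymptotic count is both shorter and safer.
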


When $m$ is sufficiently large, almost every collection of a fixed amount of edges forms a matching. As a result, the leading coefficient of $b_{m,n}$ can be determined by asymptotic considerations.


Here is the outline of the paper: In Section 2, we introduce the notations used throughout the paper. In Section 3, we review some well-known facts about quiver representations. Sections 4 and 5 focus on proving the relationship between Borel orbits and matchings. Finally, in Section 6, we present some numerical results for the number of Borel orbits.

\subsection{Acknowledgments} It is a pleasure to thank Dougal Davis, Pierre Deligne, Gurbir Dhillon, Mark Goresky, George Lusztig, Kari Vilonen, Daping Weng, Zhiwei Yun, and Xinwen Zhu, for inspiration and useful conversations. 

Y.W.L. was supported by the University of Melbourne,
IAS School of Mathematics, the National Science Foundation under Grant No. DMS-1926686, and the ARC
grant FL200100141.

\section{Notations}

\subsection{Algebraic group} Let $k$ be an arbitrary field and $\ov k$ be an algebraic closure of $k$. 

In the paper, $G$ always denote a split reductive group over $k$ and $B$ is a Borel subgroup of $G$. Let $K$ be a subgroup of $G$ such that the $B$-orbits in $G/K$ is finite. When $\chara(k) \neq 2$, we can take $K$ to be the fixed point subgroup of an involution of $G$. 

For any algebraic group $H$ over $k$, we denote by $H(R)$ the $R$ points of $H$ for any $k$-algebra $R$. The following lemma about flag variety is well-known.

\begin{lem} \label{l: flag variety rational}
    The group $G(k)$ acts transitively on $(G/B)(k)$. In other words, the $k$-points of the flag variety $G/B$ is $G(k)/B(k)$.
\end{lem}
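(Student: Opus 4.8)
The plan is to deduce the lemma from the vanishing of the Galois cohomology set $H^1(k,B)$, via the standard exact sequence of pointed sets attached to a homogeneous space.

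First I would fix the base point $x_0 := eB \in (G/B)(k)$, whose stabilizer in $G(k)$ is $B(k)$. Since $B$ is a smooth closed $k$-subgroup of $G$, the orbit map $G \to G/B$, $g \mapsto g x_0$, realizes $G/B$ as a homogeneous space under $G$, and for each $x \in (G/B)(k)$ the fiber $P_x := \{g \in G : g x_0 = x\}$ is a right $B$-torsor over $\Spec k$. A $k$-point of $P_x$ is exactly an element $g \in G(k)$ with $g x_0 = x$, so transitivity of the $G(k)$-action is equivalent to the triviality of every torsor $P_x$. Concretely, one has the exact sequence of pointed sets
\[
1 \longrightarrow B(k) \longrightarrow G(k) \longrightarrow (G/B)(k) \xrightarrow{\ \delta\ } H^1(k,B) \longrightarrow H^1(k,G),
\]
where $\delta(x) = [P_x]$; hence it suffices to show $H^1(k,B) = \{1\}$.

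To compute $H^1(k,B)$ I would use that $B$ is split solvable: since $G$ is split, write $B = T \ltimes U$ with $T$ a split maximal torus and $U = R_u(B)$ a split unipotent group over $k$. Hilbert's Theorem~90 gives $H^1(k,T) \cong H^1(k,\Gm)^{\dim T} = 0$. The group $U$ carries a filtration $U = U_0 \supseteq U_1 \supseteq \cdots \supseteq U_r = 1$ by closed normal subgroups with $U_i/U_{i+1} \cong \Ga$ (the root filtration); from the exact sequences $H^1(k,U_{i+1}) \to H^1(k,U_i) \to H^1(k,\Ga) = 0$ and downward induction (base case $H^1(k,U_r) = 1$) one gets $H^1(k,U) = \{1\}$. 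Finally the exact sequence of pointed sets
\[
H^1(k,U) \longrightarrow H^1(k,B) \longrightarrow H^1(k,T)
\]
has trivial outer terms, so $H^1(k,B) = \{1\}$. Combining this with the previous paragraph, $\delta$ is the constant map to the base point, so $G(k) \to (G/B)(k)$ is surjective; since the stabilizer of $x_0$ in $G(k)$ is $B(k)$, the orbit map descends to a bijection $G(k)/B(k) \xrightarrow{\sim} (G/B)(k)$, which is the assertion.

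The only mildly delicate point is the bookkeeping with non-abelian $H^1$: one must invoke exactness of these sequences of pointed sets (as in Serre, \emph{Galois Cohomology}, I.5.4--5.5), where exactness at an $H^1$ term means that the fiber over the base point equals the image of the incoming map. In each application here the incoming set is a singleton, or the outgoing set is trivial, so no twisting of cocycles is needed and the argument is purely formal. Everything else — that $G \to G/B$ is an étale-locally trivial $B$-torsor, and that split tori and split unipotent groups over a field have vanishing $H^1$ — is standard, so I expect no real obstacle beyond stating these inputs carefully.
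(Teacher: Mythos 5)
Your proof is correct, and it is essentially the argument the paper has in mind: the paper states this lemma without proof as well-known, but the identical Galois-cohomology mechanism (the exact sequence of pointed sets from Serre plus vanishing of the relevant $H^1$) is exactly what the paper uses for the analogous Lemma \ref{l: rational point G K}. Your reduction $H^1(k,B)=1$ via $B=T\ltimes U$ with Hilbert 90 for the split torus and the root-group filtration of the split unipotent radical is the standard completion of that argument, and your handling of the non-abelian $H^1$ bookkeeping (only fibers over base points are needed, so no twisting) is accurate.
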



\subsection{Lie algebra}
For $p \geq 4$, let $\mathfrak{so}_{2p}$ be the simple Lie algebra of Type $D_{p}$ split over $k$. We use the following convention for roots in $\mathfrak{so}_{2p}$. The roots are $\pm L_i \pm L_j$ for $1 \leq i \neq j \leq p$. The simple roots are $$ \a_i := L_i -L_{i+1}, \, 1 \leq i \leq p-1, \quad \a_p := L_{p-1} + L_{p}.$$

\subsection{Quiver}
In this paper, we will only consider quivers in the following form.  

\begin{defn}
    For $p \geq 4$, let $Q_p$ be the unique quiver of Dynkin diagram of type $D_{p}$ endowed with the orientation such that the trivalent vertex is the unique sink.  
\end{defn}
We refer to the two arrows originating at $\alpha_{p-1}$ and $\alpha_p$ as the \emph{branching arrows}.

Any positive root $\alpha$ in $\mathfrak{so}_{2p}$ can be interpreted as a dimension vector $d_{\alpha}$ on the quiver $Q_p$ and vice versa. We will identify $\alpha$ and $d_{\alpha}$ whenever there are no confusions. Let $d_{m,n}$ be $L_1 + L_2 + \cdots + L_{m+n} + (n-m)L_{m+n+2}$ in the root lattice of $\so_{2(m+n+2)}$. See Figure \ref{fig: dim vector}.

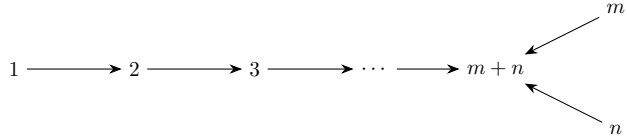
\begin{figure}[H]
\begin{center}  
\begin{tikzpicture}[>=Stealth, scale=0.8, transform shape]

\node (1) at (0, 0) {1};
\node (2) at (2, 0) {2};
\node (3) at (4, 0) {3};
\node (dots) at (6, 0) {$\cdots$};
\node (p+q) at (8, 0) {$m+n$};
\node (p) at (10, 1) {$m$};
\node (q) at (10, -1) {$n$};

\draw[->] (1) -- (2);
\draw[->] (2) -- (3);
\draw[->] (3) -- (dots);
\draw[->] (dots) -- (p+q);
\draw[<-] (p+q) -- (p);
\draw[<-] (p+q) -- (q);

\end{tikzpicture}
\end{center}
\caption{Dimension vector $d_{m,n}$}
\label{fig: dim vector}
\end{figure}

\subsection{Others}
We let $[i,j]$ be the set of integer $\{i, i+1, i+2, \cdots, j\}$.

\section{Review on quiver representations}
In this section, we review some well-known results on quiver representations. Let $p= m+n+2$ throughout this section.  

Recall that a quiver representation $V$ of $Q_{p}$ (over $k)$ is equivalent to the following data:
\begin{enumerate}
    \item $k$-vector spaces $(U_i, U^m, U^n)$ for $ 1 \le i \le m+n$, 
    \item linear homomorphisms $\phi_i: U_i \ra U_{i+1}$ for $1 \le i < m+n$, and $\psi^j: U^j \ra U_{m+n}$ for $j=m,n$.
\end{enumerate}

Note that $\dim(V)$ is $d_{m,n}$ if and only if all of the dimensions of $U_i$ and $U^j$ coincide with their indexes.

In order to relate the quiver representations with the Borel orbits of symmetric varieties in the next section, we introduction two subclasses of quiver representations. 

\begin{defn}
    A quiver representation of $Q_{p}$ is called \emph{injective} if the linear map associated to each arrow is injective.
\end{defn}

\begin{defn}
    A quiver representation of $Q_{p}$ is called \emph{complementary} if it is injective and the images of the two branching arrows form a direct sum decomposition of the target space.
\end{defn}

\begin{lem} \label{l: sum complementary}
    Let $V = V_1 \oplus V_2$ be a quiver representation of $Q_p$. Then $V$ is injective (resp. complementary) if and only if both $V_1$ and $V_2$ are injective (resp. complementary).
\end{lem}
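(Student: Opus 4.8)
The plan is to reduce the whole statement to two elementary facts about block-diagonal linear maps. Writing $V = V_1 \oplus V_2$ as quiver representations means that at every vertex the underlying space decomposes, $U_i = U_i^{(1)} \oplus U_i^{(2)}$ and $U^j = U^{j,(1)} \oplus U^{j,(2)}$ for $j = m,n$, and that each structure map respects these decompositions: $\phi_i = \phi_i^{(1)} \oplus \phi_i^{(2)}$ and $\psi^j = \psi^{j,(1)} \oplus \psi^{j,(2)}$. Thus both notions in the lemma are arrow-local, and the argument splits into the injective case and the extra condition on the two branching arrows.

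First I would handle injectivity. For a block-diagonal map $f \oplus g \colon A_1 \oplus A_2 \to B_1 \oplus B_2$ one has $\ker(f \oplus g) = \ker f \oplus \ker g$, so $f \oplus g$ is injective if and only if both $f$ and $g$ are. Applying this to each $\phi_i$ and each $\psi^j$ shows at once that $V$ is injective if and only if $V_1$ and $V_2$ are. For the complementary case, recall that by definition $V$ is complementary exactly when it is injective and $\im \psi^m \oplus \im \psi^n = U_{m+n}$ inside the target, and similarly for $V_1$, $V_2$. Since $\psi^m$ and $\psi^n$ are block-diagonal, $\im \psi^j = \im \psi^{j,(1)} \oplus \im \psi^{j,(2)}$ with $\im \psi^{j,(i)} \subseteq U_{m+n}^{(i)}$. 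Because the subspaces $U_{m+n}^{(1)}$ and $U_{m+n}^{(2)}$ meet only in $0$, a short check gives $\im \psi^m + \im \psi^n = \bigoplus_{i=1,2}\bigl(\im \psi^{m,(i)} + \im \psi^{n,(i)}\bigr)$ and $\im \psi^m \cap \im \psi^n = \bigoplus_{i=1,2}\bigl(\im \psi^{m,(i)} \cap \im \psi^{n,(i)}\bigr)$. Hence $\im \psi^m \oplus \im \psi^n = U_{m+n}$ holds if and only if $\im \psi^{m,(i)} \oplus \im \psi^{n,(i)} = U_{m+n}^{(i)}$ for $i = 1,2$. Combining this equivalence with the injective case yields that $V$ is complementary if and only if both $V_1$ and $V_2$ are, in both directions.

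I do not expect any genuine obstacle here; the one point that deserves an explicit line is the verification that the sum and the intersection of the two image subspaces split along the block decomposition of $U_{m+n}$, which follows from the trivial observation that a subspace of $A_1 \oplus A_2$ contained in $A_1$ intersects any subspace contained in $A_2$ in $0$. Everything else is routine bookkeeping over the arrows of $Q_p$.
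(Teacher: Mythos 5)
Your argument is correct and is exactly the ``elementary linear algebra argument'' that the paper invokes without detail: injectivity of a block-diagonal map is checked blockwise, and both the sum and the intersection of the images of the branching arrows split along the block decomposition of the target, so the direct-sum condition holds for $V$ iff it holds for $V_1$ and $V_2$. Nothing further is needed.
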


\begin{proof}
    It follows from an elementary linear algebra argument. 
\end{proof}
    
The following theorem is a variant of Gabriel's Theorem (cf. Propositions 4.12 and 4.15 in \cite{Lu}). Notice that $k$ is \emph{not} required to be algebraically closed in the theorem. 

\begin{thm}
     The map $$\dim: V \mapsto \dim(V)$$ induces a bijection between the set of indecomposable representations of $Q_p$ over $k$ and the set of positive roots of $\so_{2p}$.
\end{thm}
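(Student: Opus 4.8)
The plan is to deduce this from the theory of Bernstein--Gelfand--Ponomarev (BGP) reflection functors, which works over an arbitrary field $k$; the one delicate point is that $k$ is not assumed algebraically closed, so I would take care never to invoke the generally false assertion that an indecomposable has endomorphism algebra $k$, but instead to transport that property along category equivalences from a few transparent cases. First I would record the homological setup: since $Q_p$ is a finite acyclic quiver, the path algebra $k Q_p$ is finite-dimensional and hereditary, so $\Rep(Q_p)$ has Krull--Schmidt decompositions and $\Ext^{\ge 2}$ vanishes, and for all $V,W$ one has the Euler identity $\dim_k\Hom(V,W) - \dim_k\Ext^1(V,W) = \langle \dim(V),\dim(W)\rangle$ for the Euler form $\langle\cdot,\cdot\rangle$ on $\Z^{p}$ attached to $Q_p$, whose symmetrization is the Tits form $q$ of $\so_{2p}$. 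The key consequence of $Q_p$ having underlying Dynkin diagram $D_p$ is that $q$ is positive definite; hence $q(d)\ge 1$ for every nonzero $d\in\Z_{\ge 0}^{p}$, with equality exactly when $d$ is a positive root.

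Next I would introduce the reflection functors. Fix an admissible ordering of the vertices (so each vertex is a sink once all the earlier ones have been turned into sources). For a sink (resp.\ source) $i$, the functor $S_i^{+}$ (resp.\ $S_i^{-}$) restricts to an equivalence between the full subcategory of representations having no summand isomorphic to the simple $S_i$ and the corresponding subcategory over the reflected quiver, and acts on dimension vectors by the simple reflection $s_i$. Composing, the Coxeter functor $C^{+} = S_p^{+}\cdots S_1^{+}$ on $\Rep(Q_p)$ acts on the dimension vector of any module without projective summand by the Coxeter element $c = s_p\cdots s_1$, annihilates precisely the direct sums of indecomposable projectives, and restricts to an equivalence on the modules without projective summand, with quasi-inverse $C^{-}$. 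Since equivalences preserve indecomposability and endomorphism algebras, and since $\End(S_j)=\End(P_j)=k$ for every vertex $j$ (here using that $Q_p$ is acyclic), any indecomposable obtained from some $S_j$ or $P_j$ by a string of reflection functors automatically has endomorphism algebra $k$; this is the device that renders the non-closedness of $k$ harmless.

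Then I would carry out the classical length reduction. Given an indecomposable $V$, iterate $C^{+}$: as long as the iterates are nonzero they stay indecomposable (an indecomposable with a projective summand is projective, hence killed at the next step), so their dimension vectors are $c^{j}(\dim V)$. Since in Dynkin type $c$ has finite order $h$ and no eigenvalue $1$, one has $\sum_{j=0}^{h-1}c^{j}=0$, so the orbit $\{c^{j}(\dim V):j\ge 0\}$ cannot consist entirely of nonzero nonnegative vectors; hence some $(C^{+})^{t}V$ is an indecomposable annihilated by $C^{+}$, i.e.\ $(C^{+})^{t}V\cong P_i$ for a unique $i$, and applying $C^{-}$ back $t$ times gives $V\cong(C^{-})^{t}P_i$. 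This shows at once that $\dim(V)=c^{-t}(\dim P_i)$ is a positive root, that $\End(V)=k$, and that $V$ is determined up to isomorphism by $(i,t)$, hence by $\dim(V)$ --- two indecomposables with the same dimension vector die at the same step onto the same $P_i$, the vectors $\dim P_i$ being linearly independent. Conversely, for a positive root $\beta$ the $c$-orbit of $\beta$ meets $\{\dim P_1,\dots,\dim P_p\}$ --- the standard fact that the $p$ positive roots sent to negative roots by $c$ are exactly the $\dim P_i$, using $\ell(c)=p$ --- say $c^{t}(\beta)=\dim P_i$ with all intermediate vectors positive, and then $(C^{-})^{t}P_i$ is an indecomposable with dimension vector $\beta$. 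Together these give the claimed bijection.

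The step I expect to be the main obstacle is precisely the non-algebraically-closed ground field: one cannot use the familiar shortcut that an indecomposable $V$ has local endomorphism ring with residue field $k$, so a priori one must worry about indecomposables whose endomorphism algebra is a nontrivial division algebra over $k$ (imaginary-root-like behaviour). The reflection-functor argument defeats this structurally, since the functors are defined over any field, are exact equivalences on the relevant subcategories, and hence propagate $\End=k$ from the evident cases of simples and projectives to every indecomposable; an alternative closer to \cite{Lu} would combine positive-definiteness of $q$ with upper-semicontinuity of $\dim\Hom$ after base change to $\ov k$, but the reflection-functor route is cleanest and stays over $k$ throughout.
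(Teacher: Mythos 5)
Your argument is correct. The paper itself gives no proof of this statement: it simply records it as a variant of Gabriel's theorem and cites Propositions 4.12 and 4.15 of \cite{Lu}, with the one substantive remark being exactly the point you flag, namely that $k$ need not be algebraically closed. Your Bernstein--Gelfand--Ponomarev argument (reflection functors, Coxeter functor, the identity $\sum_{j=0}^{h-1}c^{j}=0$ to force every indecomposable into a projective, and the matching of the $\dim P_i$ with the positive roots made negative by $c$) is the classical field-independent proof underlying that citation, and your mechanism for propagating $\End(V)=k$ from simples and projectives along the equivalences is the right way to dispose of the non-closed-field issue; so you have supplied a valid self-contained proof where the paper defers to a reference, rather than a genuinely different route.
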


For a positive root $\alpha$, we denote by $V_{\alpha}$ the indecomposable representation associated to $\alpha$. The following lemma is proved by a direct computation and we omit the proof.

\begin{lem}\label{l: enumerate complementary}
   The representation $V_{\alpha}$ is complementary if and only if either $\alpha = L_i - L_p$, or $\alpha = L_i + L_j$ with $i \neq p-1$ and $j \neq p-1$.
\end{lem}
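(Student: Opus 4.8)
The plan is to prove the lemma by an explicit case analysis over the positive roots of $\so_{2p}$ (recall $p=m+n+2$). I work with the simple roots $\alpha_1,\dots,\alpha_p$ of Section~2, so a dimension vector of $Q_p$ records a multiplicity at each vertex $\alpha_\ell$; the sink is $\alpha_{p-2}$ (its space I write $U_{p-2}$), and the branching arrows $\psi^{p-1}\colon U^{p-1}\to U_{p-2}$ and $\psi^{p}\colon U^{p}\to U_{p-2}$ come from the fork tips $\alpha_{p-1}$ and $\alpha_p$. Expanding the positive roots in the simple roots gives five families, with dimension vectors: (i) $L_i-L_j$, $i<j\le p-1$, supported on the interval $\{\alpha_i,\dots,\alpha_{j-1}\}$ of the linear part, multiplicity $0$ on both tips; (ii) $L_i-L_p=\sum_{\ell=i}^{p-1}\alpha_\ell$, supported on $\{\alpha_i,\dots,\alpha_{p-2}\}$ with multiplicity $1$ on the tip $\alpha_{p-1}$; (iii) $L_i+L_p=\sum_{\ell=i}^{p-2}\alpha_\ell+\alpha_p$, supported on $\{\alpha_i,\dots,\alpha_{p-2}\}$ with multiplicity $1$ on the tip $\alpha_p$; (iv) $L_i+L_{p-1}$ with $i<p-1$, supported on $\{\alpha_i,\dots,\alpha_{p-2}\}$ with multiplicity $1$ on each tip; (v) $L_i+L_j$, $i<j\le p-2$, with multiplicity $1$ on $\{\alpha_i,\dots,\alpha_{j-1}\}$, $2$ on $\{\alpha_j,\dots,\alpha_{p-2}\}$, and $1$ on each tip. (With the convention $i<j$ in $L_i\pm L_j$, the simple roots $\alpha_{p-1}=L_{p-1}-L_p$ and $\alpha_p=L_{p-1}+L_p$ are the cases $i=p-1$ of (ii) and (iii); in the statement "$L_i-L_p$" is read with $i\ne p-1$, matching the condition $j\ne p-1$ imposed on $L_i+L_j$.)

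For families (i)--(iv) the representation $V_\alpha$ is the evident interval- or simple-type representation (indecomposable with the stated dimension vector, by Gabriel's theorem applied to the relevant Dynkin sub-quiver, or a direct endomorphism check), and I would eliminate the non-complementary cases by inspection. If $\alpha$ lies in family (i) with $j\le p-2$, or is $\alpha_{p-1}$ or $\alpha_p$, then $V_\alpha$ has a structure map of the form $k\to 0$ (an arrow of the linear part leaving the support, or the arrow out of a fork tip into the zero sink), so $V_\alpha$ is not injective, hence not complementary. If $\alpha=L_i-L_{p-1}$ (family (i) with $j=p-1$, including $\alpha_{p-2}$) or $\alpha=L_i+L_{p-1}$ (family (iv)), then $V_\alpha$ has $U_\ell\cong k$ on $\{\alpha_i,\dots,\alpha_{p-2}\}$, identity linear maps, and the two branching arrows either both zero or both the identity $k\to k=U_{p-2}$; in the first case $\operatorname{Im}\psi^{p-1}=\operatorname{Im}\psi^{p}=0\ne U_{p-2}$, in the second they coincide with $U_{p-2}$ and so do not form a direct sum, and either way $V_\alpha$ is not complementary. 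This leaves families (ii) and (iii) with $i\le p-2$ together with all of family (v) --- exactly the roots $L_i-L_p$, $L_i+L_p$ with $i\ne p-1$ and $L_i+L_j$ with $i,j\ne p-1$ in the statement --- and it remains to check that $V_\alpha$ is complementary for these.

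For $\alpha=L_i-L_p$ or $L_i+L_p$ with $i\le p-2$, $V_\alpha$ is the interval representation with $U_\ell\cong k$ on $\{\alpha_i,\dots,\alpha_{p-2}\}$ and on exactly one fork tip, all maps identities; then one of $\operatorname{Im}\psi^{p-1},\operatorname{Im}\psi^{p}$ is all of $U_{p-2}\cong k$ and the other is $0$, so $V_\alpha$ is complementary via $U_{p-2}=U_{p-2}\oplus 0$. The substantive case, which I expect to be the main obstacle, is $\alpha=L_i+L_j$ with $i<j\le p-2$: here $U_{p-2}\cong k^2$, and one must show that in $V_\alpha$ the lines $\operatorname{Im}\psi^{p-1}$ and $\operatorname{Im}\psi^{p}$ are transverse. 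This cannot be read off the dimension vector, and the naive choice making one branch image coincide with the image of the linear map $\phi_{j-1}\colon k\hookrightarrow k^2$ instead produces the decomposable representation $V_{L_i-L_p}\oplus V_{L_j+L_p}$ of the same dimension vector. I would resolve this by exhibiting the representation with $U_\ell=k(e_1+e_2)\subseteq k^2$ for $i\le\ell\le j-1$, $U_\ell=k^2=\langle e_1,e_2\rangle$ for $j\le\ell\le p-2$, $\operatorname{Im}\psi^{p-1}=ke_1$, $\operatorname{Im}\psi^{p}=ke_2$, and all remaining maps identities or inclusions, and then computing its endomorphism algebra: commuting with the linear maps pins down a single scalar $c$ on $\{\alpha_i,\dots,\alpha_{j-1}\}$ and a single $\Theta\in\End(k^2)$ on $\{\alpha_j,\dots,\alpha_{p-2}\}$, the two branch relations force $\Theta=\diag(c',c'')$, and the relation at $\phi_{j-1}$ forces $c'=c''=c$; hence the endomorphism algebra is $k$, the representation is indecomposable, so it equals $V_\alpha$ by Gabriel's theorem, and by construction it is complementary. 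Collecting the three surviving families and comparing with the conditions $i\ne p-1$, $j\ne p-1$ completes the proof.
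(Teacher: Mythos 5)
Your proposal is correct: the paper itself omits the argument, saying only that the lemma follows from a direct computation, and your case-by-case analysis of the five families of positive roots of $\so_{2p}$ --- including the explicit transverse-image model with $\End(V)=k$ for $\alpha=L_i+L_j$, $i<j\le p-2$, which is exactly the point that cannot be read off the dimension vector alone --- is that computation carried out in full. Your observation that the clause ``$\alpha=L_i-L_p$'' must be read with $i\neq p-1$ (since $V_{\alpha_{p-1}}$ has branching map $k\to 0$ and so is not even injective) is also the correct reading of the statement, consistent with how the admissible roots are used later in the bijection with matchings.
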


\begin{defn}
    A positive root $\alpha$ is called \emph{admissible} if the representation $V_{\alpha}$ is complementary.
\end{defn}


\begin{lem}\label{l: comp rep sum admissible}
    There is a natural bijection $\varphi$ from $\cV_{m,n}$ the equivalence classes of complementary representations $V$ with dimension $d_{m,n}$ and the ways to express $d_{m,n}$ as the sum of distinct admissible roots. 
\end{lem}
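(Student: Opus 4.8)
The plan is to deduce the statement from the Krull--Schmidt decomposition together with the variant of Gabriel's Theorem above, so that the only genuine work is checking that in a complementary representation of dimension $d_{m,n}$ no indecomposable summand occurs with multiplicity; the bijection $\varphi$ will then just record the set of indecomposable summands. In detail: every finite-dimensional representation $V$ of $Q_p$ decomposes, uniquely up to isomorphism and reordering, as $V \cong \bigoplus_{k} V_{\alpha_k}$, where by Gabriel's Theorem the $\alpha_k$ are positive roots of $\so_{2p}$ and necessarily $\sum_k \alpha_k = \dim(V)$. By Lemma~\ref{l: sum complementary} and induction on the number of summands (the one-summand case being the definition of admissibility), $V$ is complementary if and only if every $\alpha_k$ is admissible. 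Conversely, any family of admissible roots $\alpha^{(1)},\dots,\alpha^{(r)}$ with $\sum_j \alpha^{(j)} = d_{m,n}$ produces a complementary representation $\bigoplus_j V_{\alpha^{(j)}}$ of dimension $d_{m,n}$. Hence, once we know that the multiset $\{\alpha_k\}$ attached to a complementary $V$ with $\dim V = d_{m,n}$ consists of \emph{distinct} roots, the map $\varphi\colon V \mapsto \{\alpha_k\}$ and the construction above are mutually inverse, yielding the asserted bijection with the ways of writing $d_{m,n}$ as a sum of distinct admissible roots.

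It remains to prove distinctness, the only substantive step. Using the coordinates of Section~2 one has $d_{m,n} = L_1 + \cdots + L_{m+n} + (n-m)L_p$, so the coefficient of $L_i$ in $d_{m,n}$ is $1$ for every $1 \le i \le m+n = p-2$, while the coefficient of $L_{p-1}$ is $0$. By Lemma~\ref{l: enumerate complementary} an admissible root is either $L_i - L_p$ with $i \le p-1$, or $L_i + L_j$ with $i<j$ and $i,j \ne p-1$; in every such root the coefficient of any $L_i$ with $i \le m+n$ lies in $\{0,1\}$, and the coefficient of $L_{p-1}$ lies in $\{0,1\}$, being nonzero only for the single root $L_{p-1} - L_p$. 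Since $d_{m,n}$ has $L_{p-1}$-coefficient $0$ and the $\alpha_k$, being admissible, all have non-negative $L_{p-1}$-coefficient, the root $L_{p-1} - L_p$ does not occur among the $\alpha_k$. Every other admissible root involves some $L_i$ with $i \le m+n$ in its support, with coefficient $1$ (for $L_i - L_p$ with $i \ne p-1$ this forces $i \le p-2$; for $L_i + L_j$ with $i,j \ne p-1$ one has $i \le p-2$). Summing the $\alpha_k$ at such an index $L_i$ yields $1$, a sum of terms each equal to $0$ or $1$, so exactly one $\alpha_k$ has $L_i$ in its support; if two summands coincided, an index $i \le m+n$ in their common support would be covered twice, a contradiction. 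Hence the $\alpha_k$ are pairwise distinct, and the proof is complete.

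The main obstacle is exactly this distinctness argument; everything else is formal given Lemmas~\ref{l: sum complementary} and \ref{l: enumerate complementary} and Gabriel's Theorem. The one subtlety to watch is the root $L_{p-1} - L_p$: it is admissible, yet unlike every other admissible root it uses none of $L_1,\dots,L_{m+n}$, so it must be excluded separately --- via the vanishing of the $L_{p-1}$-coefficient of $d_{m,n}$ --- before the counting argument that each $L_i$ ($i \le m+n$) is hit exactly once can be applied.
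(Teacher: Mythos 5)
Your proof is correct and follows essentially the same route as the paper's: Krull--Schmidt plus Lemma~\ref{l: sum complementary} to reduce to a direct sum of admissible indecomposables, with Lemma~\ref{l: enumerate complementary} supplying distinctness and the inverse map given by forming the direct sum. The only difference is that you spell out the distinctness verification (excluding $L_{p-1}-L_p$ via the vanishing $L_{p-1}$-coefficient and then counting each $L_i$-coefficient, $i\le m+n$), a step the paper asserts without detail.
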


\begin{proof}
    We first construct the map $\varphi$. For a complementary representation $V$, we write $V = \oplus_{\alpha \in S} V_{\alpha}$ as the sum of indecomposable representations. Lemma \ref{l: sum complementary} implies these summands have to be complementary, i.e. $\alpha$ is admissible. We define $\varphi(V)$ to be $d_{m,n} = \sum_{\alpha \in S} \alpha$. The explicit description of admissible roots in Lemma \ref{l: enumerate complementary} shows that $\alpha$ are distinct. The map $\varphi$ is a bijection by the Krull-Schmidt Theorem.
\end{proof}

\section{Proofs of the result}\label{s: GLmn}
Let $U$ be a $(m+n)$-dimensional $k$-vector space, and let $U = U^{m} \oplus U^{n}$ be a direct sum decomposition of $U$ into subspaces $U^m$ and $U^n$ of dimensions $m$ and $n$, respectively. The group $K:=GL(U^m) \times GL(U^n)$ is naturally a subgroup of $G:= GL(U)$. Let $X$ be the homogeneous space $G/K$. 

By choosing a basis $e_i$ for $U$ that is compatible with the direct sum decomposition, we may identify the pair $(G,K)$ as $(GL_{m+n}, GL_m \times GL_n)$. 

We begin with a lemma regarding $k$-points of $X$. 

\begin{lem} \label{l: rational point G K}
    The natural map $G(k)/K(k) \ra X(k)=(G/K)(k)$ is bijective.
\end{lem}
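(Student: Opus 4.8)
The plan is to prove injectivity and surjectivity of $G(k)/K(k) \to X(k)$ separately; both ultimately reduce to Hilbert's Theorem 90, since $G = GL(U)$ and $K = GL(U^m) \times GL(U^n)$ are (products of) general linear groups.

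Injectivity is formal and uses nothing special about $GL$. Suppose $g, g' \in G(k)$ have the same image in $X(k)$. Then $g^{-1}g'$ fixes the base point $eK \in X$, so $g^{-1}g' \in K$; since $g^{-1}g' \in G(k)$ and $K$ is a closed $k$-subgroup of $G$, this forces $g^{-1}g' \in K(\ov k) \cap G(k) = K(k)$, i.e. $g$ and $g'$ define the same class in $G(k)/K(k)$.

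For surjectivity I would invoke the exact sequence of pointed sets attached to the $K$-torsor $G \to G/K = X$,
\[
G(k) \to X(k) \to H^1(k,K) \to H^1(k,G),
\]
which moreover identifies the set of $G(k)$-orbits on $X(k)$ with $\ker\!\big(H^1(k,K) \to H^1(k,G)\big)$. Since $K \cong GL_m \times GL_n$, Hilbert's Theorem 90 gives $H^1(k,K) = H^1(k,GL_m) \times H^1(k,GL_n) = 1$, so this kernel is trivial, $G(k)$ acts transitively on $X(k)$, and combined with injectivity this yields the claimed bijection.

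The same argument can be run without cohomology. Realize $X = G/K$ as the open subscheme of $\Gr_m(U) \times \Gr_n(U)$ parametrizing pairs $(W, W')$ of subbundles of ranks $m$ and $n$ with $W \oplus W' = U$; then $X(k)$ is literally the set of ordered pairs of complementary $k$-subspaces of $U$ of dimensions $m$ and $n$. Given such a pair, concatenating a $k$-basis of $W$ with a $k$-basis of $W'$ produces an element of $G(k) = GL(U)(k)$ carrying $(U^m, U^n)$ to $(W, W')$, which is the required lift; injectivity is exactly as above. I expect no genuine obstacle here: the only substantive point is that a $k$-point of the quotient $G/K$ is represented by a $k$-rational, rather than merely $\ov k$-rational, pair of subspaces — which is precisely what $H^1(k,K) = 1$ encodes — so the lemma is in effect a bookkeeping statement recorded for later use.
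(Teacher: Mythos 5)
Your proof is correct and its main line is essentially the paper's own: the exact sequence of pointed sets for the $K$-torsor $G\to G/K$ gives injectivity, and surjectivity follows from $\mathrm{H}^1(k,K)=\mathrm{H}^1(k,GL_m)\times \mathrm{H}^1(k,GL_n)=1$ by Hilbert's Theorem 90. The additional cohomology-free argument via complementary pairs of $k$-subspaces in the Grassmannian is a valid and pleasantly concrete alternative, but it is not needed beyond what the paper already does.
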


\begin{proof}
    From the theory of Galois cohomology \cite[Proposition 36]{Serre}, we have the following exact sequence of pointed sets $$1 \ra K(k) \ra G(k) \ra X(k) \ra \on{H}^1(k, K).$$ Therefore, the natural map $G(k)/K(k) \ra X(k)$ is injective. By (generalized) Hilbert's Theorem 90, we know that $\on{H}^1(k, GL_r)$ is trivial for any $r$. Kunneth formula implies that $\on{H}^1(k, K)$ is trivial. Hence, the natural map $G(k) \ra X(k)$ is surjective. 
\end{proof}

Combining with Lemma \ref{l: flag variety rational}, we conclude that the set of $B(k)$-orbits in $X(k)$ coincides with the set of $G(k)$-orbits in $(G/B)(k) \times X(k)$, which in turn corresponds to the set of double cosets $B(k) \bsl G(k)/ K(k)$. We will henceforth identify these sets without further mention.

\begin{prop}\label{p: bijection quiver and orbit}
    There is a bijection between the equivalence classes of complementary $k$-representations with dimension $d_{m,n}$ of $Q_{m+n+2}$ and the $B(k)$-orbits of $X(k)$. 
\end{prop}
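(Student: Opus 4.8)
The plan is to set up an explicit dictionary between double cosets $B(k)\backslash G(k)/K(k)$ and complementary representations of $Q_{m+n+2}$ with dimension vector $d_{m,n}$, using the flag $V_\bullet$ underlying a point of $G/B$ together with the fixed decomposition $U=U^m\oplus U^n$. Concretely, given $g\in G(k)$, let $V_i = \langle e_1,\dots,e_i\rangle$ be the standard full flag (so that $gB$ corresponds to the flag $gV_\bullet$), and consider the subspaces $gV_i\cap U^m$ and $gV_i\cap U^n$ of $U$; more usefully, since we work with double cosets, fix the flag and vary the decomposition, i.e. for a representative $(F_\bullet, U^m, U^n)$ of a $K(k)\times B(k)$-orbit assign the tower of vector spaces $U_i := F_i$ for $1\le i\le m+n$, $U^m$ and $U^n$, with $\phi_i\colon U_i\hookrightarrow U_{i+1}$ the inclusion of consecutive flag steps and $\psi^m\colon U^m\hookrightarrow U=F_{m+n}$, $\psi^n\colon U^n\hookrightarrow U$ the inclusions. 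This is manifestly an injective representation, and since $U^m\oplus U^n = U$ by hypothesis, the images of the two branching arrows are complementary, so the representation lies in $\cV_{m,n}$. One checks the dimension vector is $d_{m,n}$ because $\dim F_i = i$ and $\dim U^m = m$, $\dim U^n = n$.

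The key steps, in order: (1) verify that the construction above is well-defined on $B(k)\backslash G(k)/K(k)$, i.e. that two pairs in the same $(B(k)\times K(k))$-orbit give isomorphic quiver representations — this is immediate, since an element $(b,k)\in B(k)\times K(k)$ supplies a compatible change of basis realizing the isomorphism of representations; (2) construct the inverse map: given a complementary representation $V$ with $\dim V = d_{m,n}$, choose bases so that each $\phi_i$ is a standard inclusion of coordinate subspaces and $\psi^m,\psi^n$ identify $U^m,U^n$ with a pair of complementary subspaces of $U_{m+n}\cong k^{m+n}$; the full flag $U_1\subset\dots\subset U_{m+n}=U$ determines a point of $(G/B)(k)$, and the pair of complementary subspaces determines, up to $K(k)$, a point of $X(k) = G(k)/K(k)$ (using Lemma~\ref{l: rational point G K}); (3) check these two assignments are mutually inverse, which reduces to the statement that a full flag in $k^N$ together with an ordered pair of complementary subspaces, taken up to simultaneous $GL_N(k)$-change of basis, is the same data as a $B(k)$-orbit on $X(k)$ — exactly the identification recorded just before the Proposition; (4) confirm that isomorphism classes of complementary representations with the given dimension vector, as opposed to ordered tuples of spaces and maps, match orbits and not something finer — but this is built into the definition of "equivalence classes of complementary $k$-representations."

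I expect the main obstacle to be Step (2), specifically checking that the inverse construction is insensitive to the choices of bases made when normalizing the maps $\phi_i$ and $\psi^m,\psi^n$: different normalizations differ precisely by the action of $B(k)$ on the flag and $K(k)$ on the decomposition, so the resulting double coset is well-defined, but spelling this out cleanly requires being careful that an abstract isomorphism of quiver representations does not a priori respect the splitting $U=U^m\oplus U^n$ — it only respects the images of the branching arrows, which is enough precisely because complementarity forces those images to be complementary subspaces, and $GL(U)$ acts transitively on ordered pairs of complementary subspaces of fixed dimensions with stabilizer conjugate to $K$. Once this bookkeeping is in place, the bijection follows formally; combined with Lemma~\ref{l: comp rep sum admissible} this will feed into the count of $m$-matchings in $C_{m+n}$ in the next step of the proof of Theorem~\ref{t: main thm}(1).
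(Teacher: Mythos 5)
Your proposal is correct and follows essentially the same route as the paper: both identify $B(k)$-orbits on $X(k)$ with $G(k)$-orbits on pairs (complete flag, ordered pair of complementary subspaces) via the identification recorded before the Proposition, translate such a pair into a complementary quiver representation of dimension $d_{m,n}$ by taking the flag steps and the two subspaces with their inclusion maps, and observe that isomorphisms of complementary representations correspond exactly to the $G(k)$-action. Your extra care in step (2) about isomorphisms only preserving the images of the branching arrows is a valid point that the paper leaves implicit, and your resolution (complementarity plus transitivity of $GL(U)$ on ordered complementary pairs) is the right one.
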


\begin{proof}
    Note that the $B(k)$-orbits of $X(k)$ is in bijection with $G(k)$-orbits of $(G/B)(k) \times X(k)$. We can identify $G(k)$ as the general linear group $GL(U)$ for a fixed $(m+n)$-dimensional $k$-vector space $U$. 
    
    By Lemma \ref{l: rational point G K}, we know that $X(k)$ parametrizes pairs of complementary $m$-dimensional and $n$-dimensional subspaces, $(U^{m}, U^{n})$, inside $U$. Moreover, $(G/B)(k)$ parametrizes complete flags $$U_1 \inj U_2 \inj \cdots \inj U_{m+n-1} \inj U_{m+n} = U.$$ Therefore, a point $(g_1B(k),g_2K(k))$ in $(G/B)(k) \times X(k)$ gives a complementary quiver representation $V = (U_i, U^m, U^n)$ of $Q_{m+n+2}$, where $U_i = g_1 \langle e_1, e_2, \dots, e_i \rangle$, $U^m = g_2 \langle e_1, e_2, \dots, e_m \rangle$, and $U^n = g_2 \langle e_{m+1}, e_{m+2}, \dots, e_{m+n} \rangle$. It is clear that this assignment sending points in a $G(k)$-orbit to isomorphic quiver representation. Conversely, every complementary quiver representation produces a complete flag and a pair of complementary subspaces. This completes the proof.     
\end{proof}




\begin{thm}
    There is a natural bijection between the set of $B(k)$-orbits in $X(k)$ and the set of $m$-matchings in the corona graph $C_{m+n}$.
\end{thm}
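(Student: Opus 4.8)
The plan is to combine Proposition~\ref{p: bijection quiver and orbit} with Lemma~\ref{l: comp rep sum admissible} and then set up an explicit bijection between the combinatorial gadget "ways to write $d_{m,n}$ as a sum of distinct admissible roots" and "$m$-matchings in $C_{m+n}$". By the two earlier results, the set of $B(k)$-orbits in $X(k)$ is in bijection with $\cV_{m,n}$, which is in bijection with the set of subsets $S$ of admissible roots with $\sum_{\alpha \in S}\alpha = d_{m,n}$. So the whole theorem reduces to a purely combinatorial counting/matching statement about the root system $D_p$ with $p=m+n+2$, which I will prove by hand.

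First I would recall, from Lemma~\ref{l: enumerate complementary}, the precise list of admissible roots: they are $L_i - L_p$ (for $1\le i \le p-1$, $i\ne p-1$... more precisely $i\le p-2$ together with possibly $L_{p-1}-L_p$ which is allowed) and $L_i + L_j$ with $i<j$, $i\ne p-1$, $j\ne p-1$. I would organize the bookkeeping by noting $d_{m,n} = L_1 + \cdots + L_{m+n} + (n-m)L_{p}$ where $p = m+n+2$, so the coefficient of $L_{p-1}$ is $0$, the coefficient of $L_i$ is $1$ for $i\le m+n$, and the coefficient of $L_p$ is $n-m$. A subset $S$ of admissible roots summing to $d_{m,n}$ therefore (i) never uses index $p-1$ (consistent with admissibility), (ii) uses each index $i \in [1,m+n]$ exactly once, and (iii) contributes total $L_p$-coefficient $n-m$. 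Writing the roots of the form $L_i + L_j$ as "positive edges" $\{i,j\}$ on the vertex set $[1,m+n]$, the roots $L_i - L_p$ as "decorated vertices" $i$ (each contributing $-1$ to the $L_p$-coefficient), and observing there must be exactly $2m$ decorated vertices (so that $-2m + (\text{number contributing }+1) = n-m$, giving that every remaining index is matched) — this is where the combinatorics of the corona graph enters. The vertices $v_1,\dots,v_{m+n}$ of $C_{m+n}$ correspond to $[1,m+n]$; the $K_{m+n}$-edges correspond to the roots $L_i+L_j$; and the pendant edge $\{v_i, w_i\}$ corresponds to the decorated-vertex root $L_i - L_p$ (matching $i$ to its private leaf). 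Then a subset $S$ as above is exactly an $m$-matching: $m$ of the indices are "used" via $K_{m+n}$-edges in pairs (that's $\lfloor \cdot \rfloor$... actually $2m'$ indices in $m'$ internal edges) and the rest via pendant edges, and the constraint that the $L_p$-coefficient equals $n-m$ forces exactly $m$ edges total.

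Let me restate the count cleanly: if $S$ contains $j$ internal edges $L_i+L_{i'}$ and $\ell$ pendant roots $L_i - L_p$, covering $2j + \ell$ distinct indices, then by (ii) we need $2j+\ell = m+n$, and by (iii) we need $-\ell = n-m$, i.e. $\ell = m-n$ — wait, that forces $m \ge n$. To avoid this asymmetry I would instead allow both $L_i - L_p$ and $L_i + L_p$; but $L_i + L_p$ has $j = p$, so it is admissible (neither index is $p-1$), and it contributes $+1$ to the $L_p$-coefficient. So $S$ may contain $a$ roots $L_i + L_p$ and $b$ roots $L_i - L_p$; then the $L_p$ constraint reads $a - b = n-m$ and the covering constraint reads $2j + a + b = m+n$. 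Solving: $a+b = m+n-2j$, hence $a = (m+n) - j - m = n - j$... consistently $a + b = m+n-2j$ and $a-b = n-m$ gives $a = n-j$, $b = m - j$, requiring $0 \le j \le \min(m,n)$. The total number of "edges" in the matching picture is $j$ (internal) $+\, (a+b)$ (pendant) $= j + (m+n-2j) = m+n-j$. Hmm, that is not constant. So a pendant edge must count differently — I think the right dictionary pairs $L_i - L_p$ and $L_i + L_p$ both with the single pendant edge at $v_i$ but recording the *sign* is extraneous: in fact the target space of the branching arrow is $(n-m)$-dimensional only after a shift, and one should match $j$ internal $K$-edges with the full matching using $m$ edges. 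The cleanest fix: a matching of $C_{m+n}$ of size $m$ uses $j$ internal edges and $m - j$ pendant edges, covering $2j + (m-j) = m+j$ vertices, leaving $n - j$ vertices unmatched. The dictionary then sends unmatched vertices to roots $L_i$ — but $L_i$ alone is not a root of $D_p$. This signals that the correct correspondence is with $d_{m,n}$ expressed differently, and I would need to re-derive which dimension vector and which subset of roots actually matches Proposition~\ref{p: bijection quiver and orbit}; the honest accounting of the $L_p$ and $L_{p-1}$ coefficients against matching size $m$ is the crux.

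\medskip

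\textbf{The main obstacle} will be pinning down this dictionary so that "size-$m$ matching in $C_{m+n}$" matches "subset of distinct admissible roots summing to $d_{m,n}$" on the nose, including correctly handling the two branching directions $L_{p-1}\pm$ and the role of index $p-1$ (which is excluded by admissibility, matching the fact that the pendant vertices $w_i$ have degree one and cannot be doubly used). Concretely I expect to spend most of the argument (a) translating the three linear constraints on root multiplicities into the statement "the chosen $K_{m+n}$-edges plus chosen pendant edges form a matching", and (b) verifying the size is exactly $m$ — likely by showing that the number of admissible roots of type $L_i + L_p$ used is forced, so that the count of internal versus pendant edges is rigid. Once the dictionary is fixed, injectivity and surjectivity are immediate from Lemma~\ref{l: comp rep sum admissible}'s bijection plus the combinatorial observation that distinct subsets give distinct edge-sets and every $m$-matching arises; naturality (functoriality in $k$) follows because every step — Galois cohomology vanishing, the quiver dictionary, Gabriel over non-closed $k$ — was already arranged to be field-independent.
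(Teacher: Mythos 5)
Your reduction is the same as the paper's: combine Proposition \ref{p: bijection quiver and orbit} with Lemma \ref{l: comp rep sum admissible}, and then match ``subsets $S$ of distinct admissible roots summing to $d_{m,n}$'' against ``$m$-matchings of $C_{m+n}$''. Your constraint analysis is also correct: with $p=m+n+2$, writing $j$, $a$, $b$ for the number of roots in $S$ of the form $L_i+L_{i'}$ (both indices $\le m+n$), $L_i+L_p$, and $L_i-L_p$ respectively, the conditions ``each index $i\in[1,m+n]$ is used exactly once'' and ``the $L_p$-coefficient equals $n-m$'' give $2j+a+b=m+n$ and $a-b=n-m$, hence $a=n-j$ and $b=m-j$. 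But you then fail to close the dictionary, and both of your attempts at it are wrong. The observation you are missing is that the roots $L_i+L_p$ should correspond to \emph{nothing}: the correct assignment is $L_i-L_p\mapsto$ the pendant edge $\{v_i,w_i\}$, $L_i+L_{i'}$ (both indices $\le m+n$) $\mapsto$ the internal edge $\{v_i,v_{i'}\}$, and $L_i+L_p\mapsto$ the vertex $v_i$ left unmatched. The edge count is then $j+b=j+(m-j)=m$ on the nose, and the $a=n-j$ roots of type $L_i+L_p$ account exactly for the $n-j$ vertices left unmatched by an $m$-matching with $j$ internal edges; conversely every $m$-matching determines $S$ uniquely by reading this dictionary backwards, the $L_p$-coefficient coming out as $a-b=n-m$ automatically. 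Your first attempt (sending both $L_i\pm L_p$ to pendant edges) produces the non-constant count $m+n-j$ that you yourself flagged; your second attempt (sending unmatched vertices to ``$L_i$'') invents a non-root, when the roots $L_i+L_p$ that you had already counted are exactly what is needed. This is precisely the bijection the paper writes down (it is terse about the $L_i+L_p$ case, which is why the edge count works), so the gap is small but real: the step you describe as ``the crux'' and ``the main obstacle'' is the entire content of the theorem, and it is left unproved in your plan.
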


\begin{proof}
Combining Lemma \ref{l: comp rep sum admissible} and Proposition \ref{p: bijection quiver and orbit}, it suffices to show that the expressions of $d_{m,n}$ as a sum of distinct admissible roots and the $m$-matchings in the corona graph $C_{m+n}$ are in natural bijection.

We now describe the desired bijection. We label the vertices of $C_{m+n}$ as in Figure \ref{fig: C6C4}, i.e. the vertices of the complete graph are $v_i$, and each $v_i$ has an extra edge connecting to another vertex $w_i$. 

Let $d_{m,n} = \sum_{\alpha \in S} \alpha$ be an expression. If $L_i-L_{m+n+2} \in S$, then we color the edge connecting the node $v_i$ and $w_i$. If $L_i + L_j \in S$, then we color the edge connecting the node $v_i$ and $v_j$. In this way, the colored edges form a $m$-matching in $C_{m+n}$. It is straightforward to see this assignment is a bijection between the expressions of $d_{m,n}$ as a sum of distinct admissible roots and the $m$-matchings in the corona graph $C_{m+n}$.
\end{proof}

\begin{defn}
    Given a $m$-matching $\cS$ in $C_{m+n}$, we define $\cO_{\cS}$ to be the corresponding $B(k)$-orbit in $X(k)$. Conversely, given a $B(k)$-orbit $\cO$ in $X(k)$, we define $\cS_{\cO}$ to be the corresponding $m$-matching in $C_{m+n}$.
\end{defn}

It is easy to construct a $k$-point in $\cO_{\cS}$. In fact we can choose the point to be represented by a binary matrix in $GL_{m+n}(k)$, i.e. every entry is either $0$ or $1$. 

\begin{prop}\label{p: binary representative}
    Using the identification of $G$ with $GL_{m+n}$, there exists a binary matrix $g \in G(k)$ such that $gK(k)$ is in $\cO_{\cS}$. 
\end{prop}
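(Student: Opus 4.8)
The plan is to extract a representative from the quiver side of the correspondence. By Proposition~\ref{p: bijection quiver and orbit} (together with Lemma~\ref{l: comp rep sum admissible} and the identification of $B(k)$-orbits in $X(k)$ with $G(k)$-orbits in $(G/B)(k)\times X(k)$), the orbit $\cO_{\cS}$ is the one attached to the complementary representation $V := \bigoplus_{\alpha\in S} V_{\alpha}$ of $Q_{m+n+2}$, where $d_{m,n} = \sum_{\alpha\in S}\alpha$ is the expression as a sum of distinct admissible roots corresponding to $\cS$. Writing $\cF_{0}$ for the standard flag $\langle e_{1}\rangle\subset\langle e_{1},e_{2}\rangle\subset\cdots\subset U$ of the $(m+n)$-dimensional space $U$, unravelling the proof of Proposition~\ref{p: bijection quiver and orbit} shows that $\cO_{\cS} = \{\, gK(k) : \text{the quiver representation attached to }(\cF_{0},gK(k))\text{ is isomorphic to }V \,\}$, and that this representation has path spaces $U_{i} = \langle e_{1},\dots,e_{i}\rangle$ and branch spaces $U^{m} = g\langle e_{1},\dots,e_{m}\rangle$, $U^{n} = g\langle e_{m+1},\dots,e_{m+n}\rangle$. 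It therefore suffices to realize $V$ over the flag $\cF_{0}$ in such a way that the two complementary branch spaces $U^{m}$ and $U^{n}$ admit bases consisting of $0/1$-vectors which together form a basis of $U$; the matrix $g$ with these vectors as its columns (the $U^{m}$-vectors in the first $m$ positions) is then the desired binary element of $G(k)$.

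I would build such a realization one indecomposable summand at a time. For each admissible root $\alpha$ in $S$ one records a ``binary model'' of $V_{\alpha}$, supported on a block $B_{\alpha}\subseteq[1,m+n]$ of coordinates: if $\alpha = L_{i}-L_{m+n+2}$ put $B_{\alpha} = \{i\}$, with $\alpha$-space $\langle e_{i}\rangle$ and branch spaces $(U^{m}_{\alpha},U^{n}_{\alpha}) = (\langle e_{i}\rangle,0)$; if $\alpha = L_{i}+L_{m+n+2}$ do the same with the two branches interchanged; and if $\alpha = L_{i}+L_{j}$ with $i<j\le m+n$ put $B_{\alpha} = \{i,j\}$, with $\alpha$-space $\langle e_{i},e_{j}\rangle$ (on which $\cF_{0}$ induces the flag $0\subset\langle e_{i}\rangle\subset\langle e_{i},e_{j}\rangle$) and branch spaces $(U^{m}_{\alpha},U^{n}_{\alpha}) = (\langle e_{j}\rangle,\langle e_{i}+e_{j}\rangle)$. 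Each model is injective, and in each case the images of the two branching arrows are $0/1$-lines that form a direct sum decomposition of the target, so each model is complementary. Because the coefficient of each $L_{i}$ with $1\le i\le m+n$ in $d_{m,n}$ is $1$ and because $\sum_{\alpha\in S}|B_{\alpha}| = m+n$ (the dimension-vector identity), the blocks $B_{\alpha}$ partition $[1,m+n]$. Hence the direct sum of the binary models is a realization of $V$ over $\cF_{0}$ with $U^{m} = \bigoplus_{\alpha}U^{m}_{\alpha}$ and $U^{n} = \bigoplus_{\alpha}U^{n}_{\alpha}$, and since $U^{m}_{\alpha}\oplus U^{n}_{\alpha} = \langle e_{i}:i\in B_{\alpha}\rangle$ while the blocks are disjoint, the chosen $0/1$-spanning vectors do form a basis of $U$. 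Reading them off as the columns of $g$ gives the required binary matrix.

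The one place where some care is needed — and the main point of the argument — is the claim that the internal-edge model is the indecomposable $V_{L_{i}+L_{j}}$ and not some decomposable representation with the same dimension vector, for instance $V_{L_{i}-L_{m+n+2}}\oplus V_{L_{j}+L_{m+n+2}}$, which is also complementary and has the same dimension vector. Here I would use that at the trivalent sink the line propagated up the path is $\langle e_{i}\rangle$, and that $\langle e_{i}\rangle$, $\langle e_{j}\rangle$, $\langle e_{i}+e_{j}\rangle$ are three pairwise distinct lines in $\langle e_{i},e_{j}\rangle$ over any field; a short computation with these three lines shows the endomorphism algebra of the model is $k$, so the model is a brick and in particular indecomposable, whence by Gabriel's theorem in the form recalled above it is isomorphic to $V_{L_{i}+L_{j}}$. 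Granting this, the construction above exhibits the binary representative.
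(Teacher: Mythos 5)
Your construction is essentially the paper's own proof: both build $g$ summand-by-summand from the expression $d_{m,n}=\sum_{\alpha\in S}\alpha$, assigning the binary vectors $e_i$, $e_j$, $e_i+e_j$ to the two branch subspaces and reading them off as columns. The only difference is that you spell out the indecomposability of the two-coordinate block (which the paper dismisses with ``the above conditions force the decomposition type''), and that check is correct.
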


\begin{proof}
    Let $d_{m,n} = \sum_{\alpha \in S} \alpha$ be the  expression associated to $\cS$. 
    
    We fix a disjoint union decomposition of the set $\{1, 2, \cdots, m+n\} = A_m \sqcup A_n$, where $A_i$ is of cardinality $i$. Let $\chi$ be a permutation of $\{1, 2, \cdots, m+n\}$ satisfying the following conditions: 
    \begin{enumerate}
        \item if $L_i+L_j \in S$, then $\chi(i) \in A_m$ and $\chi(j) \in A_n$;
        \item if $L_i - L_{m+n+2} \in S$, then $\chi(i) \in A_m$;
        \item if $L_i + L_{m+n+2} \in S$, then $\chi(i) \in A_n$.
    \end{enumerate}
    It is easy to see such $\chi$ exists. Let $g$ be the unique element in $G(k)$ satisfying the following conditions: 
    \begin{enumerate}
        \item if $L_i+L_j \in S$ and $i < j$, then $g(e_{\chi(i)}) = e_i + e_j$ and  $g(e_{\chi(j)}) = e_j$;
        \item if $L_i - L_{m+n+2} \in S$, then $g(e_{\chi(i)}) = e_i$;
        \item if $L_i + L_{m+n+2} \in S$, then $g(e_{\chi(i)}) = e_i$.
    \end{enumerate}
    By construction, the quiver representation $(U_i,U^m,U^n)$ associated to $(B,gK)$ is given by $$
    U_i = \langle e_1, e_2, \cdots e_i \rangle, \quad U^m = g \langle e_j : j \in A_m \rangle, \quad U^n = g \langle e_j : j \in A_n \rangle.
    $$ 
    The above conditions force such a representation has the decomposition type $d_{m,n} = \sum_{\alpha \in S} \alpha$. Therefore, we have constructed the desired $g \in G(k)$.
\end{proof}

Let $K'$ be $GL_n \times GL_m$ and $X' = G/ K'$. 
By interchanging the branching arrows of the quiver $Q_p$, we know that there is a natural bijection $\DD$ between $B(k)$-orbits in $X(k)$ and $B(k)$-orbits in $X'(k)$. Translating this bijection to the matchings settings, we know that it corresponds to the following operations. Suppose $\cS$ is a $m$-matching in $C_{m+n}$, then the $n$-matching $\DD(\cS)$ can be described as follows:
\begin{enumerate}
    \item if the edge connecting $v_i$ and $v_j$ is chosen in $\cS$, then it is also chosen in $\DD(\cS)$;
    \item if the edge connecting $v_i$ and $w_i$ is chosen in $\cS$, then it is \emph{not} chosen in $\DD(\cS)$;
    \item if the edge connecting $v_i$ and $w_i$ is not chosen in $\cS$, then it is chosen in $\DD(\cS)$.
\end{enumerate}

\begin{exmp}
    The $2$-matching $\cS$ of $C_6$ in Figure \ref{fig: S and dual S} corresponds to $$d_{2,4} = (L_2 + L_4) + (L_1-L_8) + (L_3+L_8) + (L_5+L_8) +(L_6 +L_8).$$ Meanwhile, its dual $\DD \cS$ is the $4$-matching corresponding to $$d_{4,2} = (L_2 + L_4) + (L_1+L_8) + (L_3-L_8) + (L_5-L_8) +(L_6 - L_8).$$ 
\end{exmp}

\begin{figure}[h]
\centering

\begin{minipage}[b]{0.45\textwidth}
\centering

\begin{tikzpicture}
[every node/.style={circle, draw, inner sep=1pt}, scale=0.5]

\foreach \i [count=\j from 0] in {1,...,6} {
    \node (v\i) at ({120 + 360/6 * \j}:2) {$v_\i$};
}

\foreach \i in {1,...,6} {
    \node (w\i) [circle, draw, inner sep=0.4pt] at ({120 + 360/6 * (\i-1)}:4) {$w_\i$};
    \draw[line width=0.5mm] (v1) -- (w1); 
    \draw (v\i) -- (w\i); 
}

\foreach \i in {1,...,6} {
    \foreach \j in {1,...,6} {
        \ifnum\i<\j
            \ifnum\i=2 \ifnum\j=4
                \draw[line width=0.5mm] (v\i) -- (v\j); 
            \else
                \draw (v\i) -- (v\j); 
            \fi
            \else
                \draw (v\i) -- (v\j); 
            \fi
        \fi
    }
}

\end{tikzpicture}

\end{minipage}
\hfill
\begin{minipage}[b]{0.45\textwidth}
\centering
\begin{tikzpicture}
[every node/.style={circle, draw, inner sep=1pt}, scale=0.5]

\foreach \i [count=\j from 0] in {1,...,6} {
    \node (v\i) at ({120 + 360/6 * \j}:2) {$v_\i$};
}

\foreach \i in {1,...,6} {
    \node (w\i) [circle, draw, inner sep=0.4pt] at ({120 + 360/6 * (\i-1)}:4) {$w_\i$};
    \draw[line width=0.5mm] (v3) -- (w3); 
    \draw[line width=0.5mm] (v5) -- (w5);
    \draw[line width=0.5mm] (v6) -- (w6);
    \draw (v\i) -- (w\i); 
}

\foreach \i in {1,...,6} {
    \foreach \j in {1,...,6} {
        \ifnum\i<\j
            \ifnum\i=2 \ifnum\j=4
                \draw[line width=0.5mm] (v\i) -- (v\j); 
            \else
                \draw (v\i) -- (v\j); 
            \fi
            \else
                \draw (v\i) -- (v\j); 
            \fi
        \fi
    }
}

\end{tikzpicture}
\end{minipage}

\caption{A $2$-matching $\cS$ and its dual $\DD \cS$ in $C_6$}
\label{fig: S and dual S}
\end{figure}

\begin{rem}
    The results in this section can be generalized to Bruhat decomposition for $GL_n$.
\end{rem}

\section{Other classical types} \label{s: other classical}
In this section, we are going to mention results for other classical types that arising from an involution of Type AIII. 

Throughout this section, we assume that $\chara(k) \neq 2$. Hence, we can apply the general theory for symmetric varieties. 

\subsection{General theory} \label{ss: general theory}
Let $G$ be a reductive group split over $k$. Let $\theta$ be an involution of $G$ and $K$ be the fixed points subgroup $G^{\theta}$. Then $G^{\theta}$ is a symmetric subgroup. As before, we denote by $X$ the symmetric variety $G/K$. We know that there exists a $\theta$-invariant maximal torus $T$ of $G$. Let $N$ be the normalizer of $T$ in $G$. 


Recall the following well-known result for symmetric varieties (see \cite{S} for details). Since $X(\ov{k})$ coincides with $G(\ov{k})/K(\ov{k})$, a $\ov{k}$-point $x$ in $X$ can be written as $gK$ for some $g \in G(\ov{k})$.

\begin{prop} \label{p: two representatives}
    Each $B$-orbit of $X$ contains a $\ov{k}$-point $x = gK$ such that $\tau(x):= g \theta(g)^{-1} \in N$. Moreover, if $y = hK$ is another $\ov{k}$-point in $X$ such that $\tau(y):= h \theta(h)^{-1} \in N$, then $x$ and $y$ are in the same $B$-orbit if and only if $x$ and $y$ are in the same $T$-orbit.
\end{prop}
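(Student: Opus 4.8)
The plan is to transport the whole question into $G$ via the twisted map $\tau\colon g\mapsto g\theta(g)^{-1}$. First I would record its formal properties: $\tau$ descends to $\bar\tau\colon X(\ov k)=G(\ov k)/K(\ov k)\to G(\ov k)$ (since $\theta|_{K}=\id$); $\bar\tau$ is injective (from $g\theta(g)^{-1}=h\theta(h)^{-1}$ one gets $h^{-1}g=\theta(h^{-1}g)\in G^{\theta}=K$); its image lies in $\{v:\theta(v)=v^{-1}\}$; and it intertwines left translation by $G$ on $X$ with the twisted conjugation action $g*v:=gv\theta(g)^{-1}$ on $G$. Hence $B$-orbits on $X$ correspond to $B$-orbits (for $*$) on $\tau(X)$, the condition $\tau(x)\in N$ depends only on this orbit-image, and I may pass freely between $x=gK$ and $v=\tau(x)$, with $\tau(bx)=bv\theta(b)^{-1}$ and $\tau(tx)=tv\theta(t)^{-1}$. (I take $T\subseteq B$, which is legitimate by the torus lemma below applied to $\theta$ itself.)

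The key input is a torus lemma: \emph{for any involution $\sigma$ of $G$, every Borel subgroup $B'$ contains a $\sigma$-stable maximal torus.} I would prove it by putting $H:=B'\cap\sigma(B')$, which is connected solvable, is $\sigma$-stable because $\sigma(B'\cap\sigma(B'))=\sigma(B')\cap B'$, and contains a maximal torus of $G$ (any two Borels do); the maximal tori of $H$ form a torsor under $R_u(H)$ carrying a $\sigma$-action, and since $\operatorname{char}(\ov k)\ne2$ the pointed set $H^1(\ZZ/2,R_u(H))$ is trivial (dévissage to copies of $\mathbb G_a$, on which multiplication by $2$ is invertible), so $\sigma$ fixes a point of the torsor, i.e. $B'$ contains a $\sigma$-stable maximal torus. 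To deduce statement~(1): for $v$ with $\theta(v)=v^{-1}$ the map $\theta_v:=\operatorname{Int}(v)\circ\theta$ is again an involution, since $\theta_v^{2}=\operatorname{Int}(v\theta(v))=\id$; and expanding $bv\theta(b)^{-1}\,T\,(bv\theta(b)^{-1})^{-1}=T$ shows that, for $b\in B$, one has $bv\theta(b)^{-1}\in N$ if and only if the maximal torus $b^{-1}Tb\subseteq B$ is $\theta_v$-stable. Now given a $B$-orbit, choose a point $x_0=g_0K$ in it, set $v_0=\tau(x_0)$, apply the lemma to $\theta_{v_0}$ to get a $\theta_{v_0}$-stable maximal torus of $B$, write it as $b^{-1}Tb$ with $b\in B$; then $bx_0$ lies in the same orbit and $\tau(bx_0)=bv_0\theta(b)^{-1}\in N$ by the displayed equivalence.

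For statement~(2), the implication $x\sim_T y\Rightarrow x\sim_B y$ is trivial. Conversely let $y=bx$ with $b\in B$; put $v=\tau(x)$, $v'=\tau(y)=bv\theta(b)^{-1}$, both in $N$. Then $T$ is $\theta_v$-stable (as $v$ normalizes $T$) and, by the computation above, so is $T_1:=b^{-1}Tb\subseteq B$; both are maximal tori of the $\theta_v$-stable connected solvable group $H:=B\cap\theta_v(B)$. The $\theta_v$-fixed points of the torsor of maximal tori of $H$ form a single orbit under $R_u(H)^{\theta_v}$ (same $H^1$-vanishing), so $wTw^{-1}=T_1$ for some $w\in R_u(H)^{\theta_v}$. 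Then $bw$ normalizes $T$ and lies in $B$, so $t:=bw\in B\cap N=T$ and $b=tw^{-1}$; since $\theta_v(w)=w$ forces $w^{-1}v\theta(w)=v$, one computes $v'=bv\theta(b)^{-1}=t\,w^{-1}v\theta(w)\,\theta(t)^{-1}=tv\theta(t)^{-1}=\tau(tx)$, whence $y=tx$ by injectivity of $\bar\tau$, i.e. $x\sim_T y$.

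The hard part is the torus lemma and, within it, the vanishing of $H^1(\ZZ/2,-)$ on connected unipotent groups in characteristic $\ne2$; once that is in hand, both parts reduce to formal manipulation with $\tau$ and twisted conjugation. Two caveats to keep track of: ``maximal torus of $H$'' must be read as ``maximal torus of $G$ contained in $H$'' (legitimate since two Borels always share one), and the argument runs over $\ov k$, so $R_u(H)$ is split unipotent and the dévissage is available.
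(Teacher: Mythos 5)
The paper does not actually prove this proposition: it is quoted as a ``well-known result for symmetric varieties'' with a citation to \cite{S}, so there is no in-paper argument to compare against. Your reconstruction is, in substance, exactly the standard proof from that source (Springer's argument via twisted conjugation): transport everything through $\tau$, observe that $\tau(x)=v$ with $\theta(v)=v^{-1}$ yields the involution $\theta_v=\operatorname{Int}(v)\circ\theta$, get existence of a representative in $N$ from a $\theta_v$-stable maximal torus inside $B$, and get the $B$-orbit/$T$-orbit comparison from the conjugacy of $\theta_v$-stable maximal tori of $B\cap\theta_v(B)$ under the $\theta_v$-fixed points of its unipotent radical. All the computational steps check out ($\theta_v^2=\id$, the equivalence $bv\theta(b)^{-1}\in N\iff b^{-1}Tb$ is $\theta_v$-stable, $B\cap N=T$, and the final identity $v'=tv\theta(t)^{-1}$), and the reduction to $T\subseteq B$ is the correct normalization for the statement to make sense.

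One imprecision worth fixing: the maximal tori of a connected solvable group $H$ do \emph{not} in general form a torsor under $R_u(H)$ --- the stabilizer of a maximal torus $T_0$ is $C_{R_u(H)}(T_0)$, which can be nontrivial (e.g.\ $H=T\times\mathbb{G}_a$). This does not break either application: for existence one runs the d\'evissage along a characteristic central series of $R_u(H)$, solving $\bar z-\sigma(\bar z)=\bar z_0$ at each abelian stage using that $2$ is invertible; for uniqueness, given two $\sigma$-stable tori $T_0$ and $uT_0u^{-1}$ one finds $u^{-1}\sigma(u)$ is a cocycle in the connected unipotent group $C_{R_u(H)}(T_0)$, and the same $H^1(\mathbb{Z}/2,-)$-vanishing (now for the stabilizer) produces a $\sigma$-fixed conjugator. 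With that correction the argument is complete.
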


\subsection{Symplectic group} \label{ss: symplectic}
Let $U$ be a $k$-vector space of dimension $2m+2n$ and $G$ be $GL(U)$. Suppose we have a (split) non-degenerate bilinear form $\langle \cdot , \cdot \rangle$ on $U$, i.e. there exists a basis $u_j$ for $j \in [-m-n,-1] \cup [1,m+n]$ such that $\langle e_{-j}, e_{j} \rangle = 1$ if $j>0$ and $\langle e_{k}, e_{j} \rangle = 0$ if $k+j \neq 0$. We fix such a basis, hence we obtain a Lagrangian subspace of $U$, i.e. the subspace spanned by $e_1, e_2, \cdots, e_m$. 

Let $U_{i}$ be the subspace of $U$ generated by $e_j$ for $j \le i$. Then we have a complete flag 
\begin{equation} \label{eq: complete flag}
    0 \inj U_{-m-n} \inj U_{-m-n+1} \inj \cdots \inj U_{-1} \inj U_{1} \inj \cdots \inj U_{m+n-1} \inj U_{m+n}=U.
\end{equation}
In other words, the natural ordering of $[-m-n,-1] \cup [1,m+n]$ produces a complete flag of $U$, hence a Borel subgroup $B$ of $G$. 

The bilinear form induces an involution $\varphi$ on $GL(U)$ by 
\begin{equation} \label{eq: varphi definition}
    \langle v, w \rangle = \langle g(v), \varphi(g)(w) \rangle, \quad v,w \in U.
\end{equation}

Concretely, if we identify $G$ with $GL_{2m+2n}$ via the basis $u_i$, then $\varphi(g) = Jg^{t,-1}J^{-1}$, where $J$ is the anti-diagonal matrix whose top right (resp. bottom left) quadrant entries are $-1$ (resp. $1$). Under this identification, $B$ becomes the subgroup of upper triangular matrices. Note that $G^{\varphi}$ is isomorphic to the split group $Sp_{2m+2n}$. Since $B$ is $\varphi$-stable, $B^{\varphi}$ is a Borel subgroup of $G^{\varphi}$.  

Let $U^{2m}$ be the subspace spanned by $u_i$ for $i \in [-m,-1] \cup [1,m]$ and $U^{2n}$ be the subspace spanned by $e_i$ for $i \in [-m-n,-m-1] \cup [m+1,m+n]$. The symplectic form on $U$ restricts to symplectic forms on both $U^{2m}$ and $U^{2n}$. Let $\phi$ be the automorphism of $U$ such that 
\begin{equation}\label{eq: phi definition}
    \phi(u) =
\begin{cases}
u & \text{if } u \in U^{2m}, \\
-u & \text{if } u \in U^{2n}.
\end{cases}
\end{equation}

Let $\theta$ be the conjugation action of $\phi$ on $G$. Note that $\theta$ is an involution. Let $K$ be the fixed point subgroup of $\theta$ in $G$. Then $K$ is isomorphic to $GL(U^{2m}) \times GL(U^{2n})$. Since $\theta$ and $\varphi$ commute with each other, we know that $K$ is $\varphi$-stable and $K^{\varphi}$ is isomorphic to $Sp(U^{2m}) \times Sp(U^{2n})$. 

Similar to Lemma \ref{l: rational point G K}, we have the following result regarding the pair $(G^{\varphi}, K^{\varphi})$.

\begin{lem}\label{l: gk phi rational}
    The natural map $G^{\varphi}(k)/K^{\varphi}(k) \ra (G^{\varphi}/K^{\varphi})(k)$ is bijective.
\end{lem}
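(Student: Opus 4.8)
The plan is to mimic the proof of Lemma~\ref{l: rational point G K} verbatim, replacing the pair $(GL_{m+n}, GL_m \times GL_n)$ by $(G^\varphi, K^\varphi)$. Applying the theory of Galois cohomology as in \cite[Proposition 36]{Serre} to the homogeneous space $G^\varphi/K^\varphi$, one gets an exact sequence of pointed sets
\[
1 \ra K^\varphi(k) \ra G^\varphi(k) \ra (G^\varphi/K^\varphi)(k) \ra \on{H}^1(k, K^\varphi).
\]
Exactness at $G^\varphi(k)$ shows that the fibers of $G^\varphi(k) \ra (G^\varphi/K^\varphi)(k)$ are exactly the $K^\varphi(k)$-cosets, hence the induced map $G^\varphi(k)/K^\varphi(k) \ra (G^\varphi/K^\varphi)(k)$ is injective. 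For surjectivity, exactness at $(G^\varphi/K^\varphi)(k)$ reduces us to showing that $\on{H}^1(k, K^\varphi)$ is trivial.

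To see this, recall from Section~\ref{ss: symplectic} that $K^\varphi \cong Sp(U^{2m}) \times Sp(U^{2n}) \cong Sp_{2m} \times Sp_{2n}$. The Künneth formula for non-abelian $\on{H}^1$ gives $\on{H}^1(k, K^\varphi) \cong \on{H}^1(k, Sp_{2m}) \times \on{H}^1(k, Sp_{2n})$, so it suffices to know that $\on{H}^1(k, Sp_{2r})$ is trivial for all $r$. This is classical: $Sp_{2r}$ is the automorphism group of a split non-degenerate alternating form, so $\on{H}^1(k, Sp_{2r})$ classifies non-degenerate alternating bilinear forms of rank $2r$ over $k$ up to isomorphism, and any such form admits a symplectic basis and is therefore isomorphic to the split one. (Equivalently, $Sp_{2r}$ is a special group in the sense of Serre, just as $GL_r$ is via Hilbert's Theorem 90.) Hence $\on{H}^1(k, K^\varphi)$ is trivial, the natural map is surjective, and the lemma follows.

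I do not anticipate a genuine obstacle here; the argument is routine. The only point worth flagging is that the vanishing of $\on{H}^1$ rests on triviality of symplectic torsors rather than on Hilbert's Theorem 90 as in Lemma~\ref{l: rational point G K}; since the present section already assumes $\chara(k) \ne 2$, there is no subtlety about the group scheme $Sp_{2r}$, and the form-theoretic uniqueness statement is in any case unconditional on the characteristic.
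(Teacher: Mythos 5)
Your proof is correct, but it takes a genuinely different route from the paper's. You close the argument by proving that $\on{H}^1(k, K^{\varphi})$ vanishes, using that $Sp_{2r}$ is special (every nondegenerate alternating form of rank $2r$ splits), so surjectivity of $G^{\varphi}(k) \ra (G^{\varphi}/K^{\varphi})(k)$ falls out of the same cohomological exact sequence that gives injectivity. The paper instead only quotes the exact sequence for injectivity and proves surjectivity by a hands-on descent: it uses the symmetrization map $\tau(g) = g\theta(g)^{-1}$ to embed $G^{\varphi}/K^{\varphi}$ into $G^{\varphi}$, and then reduces the rationality of a preimage $g$ of a $k$-point to the already-established $GL$ case (Lemma~\ref{l: rational point G K}), i.e.\ to the fact that matrices conjugate over $\ov{k}$ are conjugate over $k$; this keeps the whole section within the symmetrization formalism that is used again in Propositions~\ref{p: two representatives} and~\ref{p: single coset}. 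Your version is shorter and more standard (it is exactly the ``special group'' argument and generalizes immediately to any $K$ with trivial $\on{H}^1$), at the cost of invoking the classification of symplectic torsors; the paper's version is more elementary and self-contained but is specific to this pair. One cosmetic point: exactness of the pointed-set sequence at $G^{\varphi}(k)$ literally identifies only the fiber over the distinguished coset with $K^{\varphi}(k)$; for the other fibers you should either twist or just note that if $g, g'$ have the same image then $g^{-1}g' \in K^{\varphi}(\ov{k}) \cap G^{\varphi}(k) = K^{\varphi}(k)$. This is the same elision the paper makes in Lemma~\ref{l: rational point G K}, so it is not a gap.
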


\begin{proof}
    The exact sequence in Lemma \ref{l: rational point G K} implies the map is injective. 
    
    Consider the symmetrization map $\tau: G^{\varphi} \ra G^{\varphi}$ defined by $\tau(g) = g\theta(g)^{-1}$. This descends to an embedding $G^{\varphi}/K^{\varphi} \inj G^{\varphi}$.

    Suppose there exists a $g \in G^{\varphi}(\ov{k})$ such that $g\theta(g)^{-1} \in G^{\varphi}(k)$, we claim that $g \in G^{\varphi}(k)$. Recall that $\theta(g) = \phi g \phi^{-1}$ in \eqref{eq: phi definition}. We know that $$g \phi g^{-1} \phi^{-1} \in G(k) \implies g \in G(k), $$ by Lemma \ref{l: rational point G K}. Another way to see that is using the fact that two matrices are conjugate in $k$ if and only if they are conjugate in $\ov{k}$. Therefore, $g$ is in the intersection $G(k) \cap G^{\varphi}(\ov{k}) = G^{\varphi}(k)$. 
    
    To conclude, every $k$-point in the image of $\tau$ arises from a $k$-point in $G^{\varphi}$.
\end{proof}

Lemma \ref{l: gk phi rational} implies that the set of $B^{\varphi}(k)$-orbits of $X^{\varphi}(k)$ is in bijection with the set of double coset $B^{\varphi}(k) \bsl G^{\varphi}(k) / K^{\varphi}(k)$.

\begin{rem}
    The map $G(k)/K(k) \ra (G/K)(k)$ can be not surjective for general symmetric pairs $(G,K)$. One example is $G = \gm$, $K = \mu_{2}$, and $k$ is finite.  
\end{rem}

Our goal is to study $B^{\varphi}(k) \bsl G^{\varphi}(k) / K^{\varphi}(k)$ via results about $B(k) \bsl G(k) / K(k)$ in Section \ref{s: GLmn}. 

Since $B(k)$ and $K(k)$ are $\varphi$-stable, there is a natural action of $\varphi$ on $B(k) \bsl G(k) / K(k)$. Equivalently, $\varphi$ acts on the $2m$-matchings in $C_{2m+2n}$. Let us now describe the action explicitly.

We adopt a slightly different labeling for $C_{2m+2n}$ than in the previous sections. The nodes of the complete subgraph are labeled $v_{i}$ for $i \in [-m-n,-1] \cup [1,m+n]$, and the corresponding extended nodes are $w_{i}$. There is a canonical graph automorphism of $C_{2m+2n}$ given by swapping $v_{i} \leftrightarrow v_{-i}$ and $w_{i} \leftrightarrow w_{-i}$. This automorphism induces an involution on the set of $2m$-matchings of $C_{2m+2n}$, which we denote by $\cS \mapsto -\cS$ and refer to as the minus involution. See Figure \ref{fig: S and minus S} for an example. 

\begin{prop} \label{p: minus action}
    The $\varphi$-action on the $2m$-matchings in $C_{2m+2n}$ coincides with the minus involution, i.e. $\varphi(\cS) = -\cS$. 
\end{prop}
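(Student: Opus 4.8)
The strategy is to trace the $\varphi$-action explicitly through the chain of bijections established in Section~\ref{s: GLmn}, namely: $2m$-matchings of $C_{2m+2n}$ $\leftrightarrow$ expressions of $d_{m',n'}$ (with $m'=2m$, $n'=2n$) as sums of distinct admissible roots $\leftrightarrow$ complementary quiver representations $\leftrightarrow$ points $(g_1B(k), g_2K(k))$ up to $G(k)$, which in turn correspond to pairs (complete flag of $U$, ordered pair of complementary subspaces $(U^{2m}, U^{2n})$). Under this last identification, a $B(k)$-orbit is represented by the pair consisting of the standard flag $U_\bullet$ from \eqref{eq: complete flag} and the pair $(gU^{2m}, gU^{2n})$ for a suitable $g$. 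The plan is to use the binary-matrix representative of Proposition~\ref{p: binary representative}: choose $g \in G(k)$ a binary matrix with $gK(k) \in \cO_{\cS}$, so the corresponding quiver representation has $U_i = \langle u_j : j \le i\rangle$ in the ordering of $[-m-n,-1]\cup[1,m+n]$, and $U^{2m}, U^{2n}$ are spanned by the relevant $g(u_{\chi(i)})$'s.

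First I would compute $\varphi(g)$. The $B(k)$-orbit of $\varphi(gK) = \varphi(g)K$ corresponds to the pair $(\varphi(g)U_\bullet, (\varphi(g)U^{2m}, \varphi(g)U^{2n}))$ — but we must rewrite this so the flag is again the \emph{standard} flag, by acting on the left by an element of $B(k)$; equivalently we describe the quiver representation up to isomorphism. From \eqref{eq: varphi definition}, $\varphi(g)$ sends the flag $gU_\bullet$ to the flag whose $i$-th space is the orthogonal complement of $gU_{-i}$. For the standard flag $U_\bullet$ itself, $\langle \cdot,\cdot\rangle$ was chosen precisely so that $U_i^{\perp} = U_{-i}$ (this is what the anti-diagonal $J$ with $\pm 1$ blocks encodes), so the standard flag is $\varphi$-stable. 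Hence the $B(k)$-orbit of $\varphi(g)K$ is again represented by a pair (standard flag, pair of complementary subspaces), where the two subspaces are the $\langle\cdot,\cdot\rangle$-orthogonal complements — but taken \emph{across} the decomposition: since $\theta$ and $\varphi$ are related by $\theta(g) = \phi g \phi^{-1}$ and $\phi$ acts by $+1$ on $U^{2m}$, $-1$ on $U^{2n}$, the subspace $U^{2m}$ is the orthogonal complement of $U^{2n}$ and vice versa (both are non-degenerate for the restricted form, and $U = U^{2m}\perp U^{2n}$). Working this through, the effect on the admissible-root decomposition is to apply the sign change $L_i \mapsto -L_i$ on all the ``finite'' coordinates $L_1,\dots,L_{m+n}$ coming from the flag — which is exactly the reindexing $j \leftrightarrow -j$ of the basis vectors $u_j$.

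Next I would translate this reindexing back to the matching side. A root $L_i + L_j$ in the decomposition (with $i,j \in [-m-n,-1]\cup[1,m+n]$ in the relabeled convention) corresponds to the edge $v_i v_j$ of the complete subgraph; sending the basis index $j \mapsto -j$ sends this edge to $v_{-i}v_{-j}$, matching clause (1) of the minus involution. A root $L_i - L_{p}$ (pendant edge $v_i w_i$, landing in $U^{2m}$) versus $L_i + L_p$ (no pendant edge at $i$, landing in $U^{2n}$): here the swap $U^{2m}\leftrightarrow U^{2n}$ induced by the orthogonal-complement passage toggles membership, so a chosen pendant edge $v_iw_i$ becomes the non-chosen one and vice versa, and simultaneously the index $i$ flips to $-i$; composing these two effects is precisely clauses (2)–(3) of the minus involution together with the vertex swap $w_i \leftrightarrow w_{-i}$. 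Thus $\varphi(\cS) = -\cS$.

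The main obstacle I anticipate is the bookkeeping in the middle step: correctly tracking how the passage from $gK$ to $\varphi(g)K$ interacts \emph{simultaneously} with (a) the orthogonal-complement operation on the flag, which forces a re-expression relative to the standard flag and hence the sign flip $L_i \mapsto -L_i$, and (b) the interchange of the roles of $U^{2m}$ and $U^{2n}$, which is what produces the pendant-edge toggle (clauses (2)–(3)) rather than a mere vertex relabeling. It will be cleanest to verify this on the explicit binary representative $g$ of Proposition~\ref{p: binary representative}: compute $\varphi(g) = Jg^{t,-1}J^{-1}$ directly on the basis vectors $u_{\chi(i)}$, read off the resulting quiver representation's decomposition type, and check term-by-term against the three clauses. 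One should also double-check the sign conventions in $J$ (the $\pm1$ in the two quadrants) do not introduce spurious signs obstructing the identification — they only affect the symplectic form, not the combinatorics of which subspaces are spanned, so the matching is unaffected, but this deserves an explicit remark.
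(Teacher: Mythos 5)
Your overall strategy --- verify the claim on the explicit binary representative of Proposition~\ref{p: binary representative} by computing $\varphi(g)$ on the basis vectors and reading off the decomposition type --- is exactly the paper's proof. However, your description of what that computation yields contains a genuine error in the pendant-edge analysis. You claim that the passage $gK \mapsto \varphi(g)K$ produces \emph{both} the index flip $i \mapsto -i$ \emph{and} a toggle of pendant edges (chosen $\leftrightarrow$ not chosen), and that the composite is the minus involution. But the minus involution as defined in the paper is only the graph relabeling $v_i \leftrightarrow v_{-i}$, $w_i \leftrightarrow w_{-i}$; it has no toggle clauses --- those belong to the duality $\DD$ of Section~\ref{s: GLmn}, which you appear to have conflated with it. The operation you actually describe is $\DD(-\cS)$, not $-\cS$, and it is a $2n$-matching, which for $m \neq n$ is not even a $2m$-matching; since $\varphi$ preserves $K$ and hence acts on $B(k)\backslash G(k)/K(k)$, this cannot be the answer.

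The source of the error is the handling of the orthogonal complement. From \eqref{eq: varphi definition} one gets $\varphi(g)(W) = \left(g(W^\perp)\right)^\perp$, so with $(U^{2m})^\perp = U^{2n}$ one has $\varphi(g)(U^{2m}) = \left(g(U^{2n})\right)^\perp$. Relative to the chosen basis, the perp of a coordinate subspace $\langle u_j : j \in T\rangle$ is $\langle u_{-j} : j \notin T\rangle$: ``complement the support, then negate indices.'' The complementation exactly cancels the swap $U^{2m} \leftrightarrow U^{2n}$, leaving only the negation. Concretely, if $\cS$ has a pendant edge at $v_a$, so $u_a \in gU^{2m}$, then $u_{-a} \in \left(gU^{2n}\right)^\perp = \varphi(g)U^{2m}$, and $\varphi(\cS)$ has the pendant edge at $v_{-a}$ \emph{chosen}, not toggled off. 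This is precisely what the paper's term-by-term computation $\varphi(g)(e_{-\chi(i)}) = \pm e_{-i}$, uniform across all three cases, records. Once this double-counting is corrected, your argument goes through and coincides with the paper's.
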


\begin{proof}
    It suffices to show that $\varphi(g)K(k)$ is in $\cO_{-\cS}$ for a particular $gK(k) \in \cO_{\cS}$. We verify the statement for the representative constructed in Proposition \ref{p: binary representative}. Let us fix the disjoin union decomposition of $[-m-n,1]\cup [1,m+n] = A_{2m} \sqcup A_{2n}$, where $$A_{2m} = [-m,1] \cup [1,m] \text{ and } A_{2n} = [-m-n, -m-1] \cup [m+1, m+n].$$ Note that both $A_{2m}$ and $A_{2n}$ are closed under negation. Moreover, we fix a suitable permutation $\chi$ of $[-m-n,1]\cup [1,m+n]$ as in Proposition \ref{p: binary representative}. Then there exists an unique element $g$ in $G(k)$ satisfying the following conditions: 
    \begin{enumerate}
        \item if $v_i$ and $v_j$ are linked in $\cS$ and $i < j$, then $g(e_{\chi(i)}) = e_i + e_j$ and  $g(e_{\chi(j)}) = e_j$;
        \item if $v_i$ and $w_i$ are linked in $\cS$, then $g(e_{\chi(i)}) = e_i$;
        \item if $v_i$ and $w_i$ are not linked in $\cS$, then $g(e_{\chi(i)}) = e_i$.
    \end{enumerate}
    Using \eqref{eq: varphi definition}, It is easy to see that, for some suitable choices of the signs $\pm$, the element $\varphi(g) \in G(k)$ satisfies the following conditions: 
    \begin{enumerate}
        \item if $v_i$ and $v_j$ are linked in $\cS$ and $i< j$, then $\varphi(g)(e_{-\chi(i)}) = \pm e_{-i}$ and  $\varphi(g)(e_{-\chi(j)}) = \pm e_{-i} \pm e_{-j}$;
        \item if $v_i$ and $w_i$ are linked in $\cS$, then $\varphi(g)(e_{-\chi(i)}) = \pm e_{-i}$;
        \item if $v_i$ and $w_i$ are not linked in $\cS$, then $\varphi(g)(e_{-\chi(i)}) = \pm e_{-i}$.
    \end{enumerate} 
    Hence, $\varphi(g)K$ is in $\cO_{-\cS}$, i.e. $\varphi(\cS) = -\cS$.
    \end{proof}

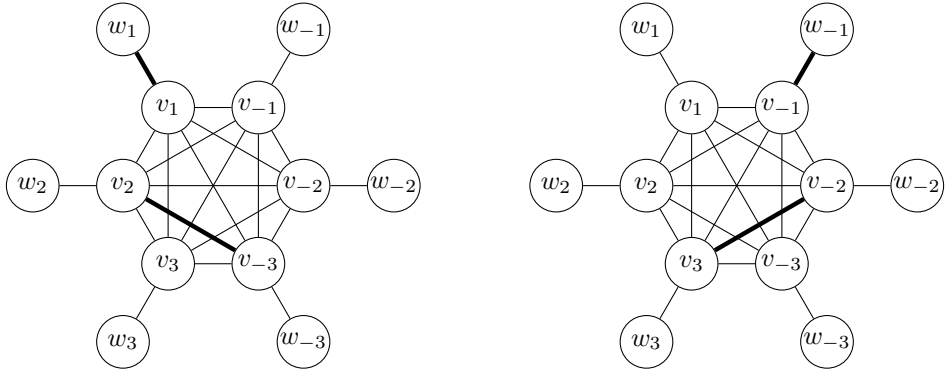
\begin{figure}[h]
\centering

\begin{minipage}[b]{0.45\textwidth}
\centering

\begin{tikzpicture}[scale=0.6,
  every node/.style={circle, draw, minimum size=7mm, inner sep=0pt}]

\node (v1) at (120:2) {$v_1$};
\node (v2) at (180:2) {$v_2$};
\node (v3) at (240:2) {$v_3$};
\node (v-3) at (300:2) {$v_{-3}$};
\node (v-2) at (0:2)   {$v_{-2}$};
\node (v-1) at (60:2)  {$v_{-1}$};

\node (w1) at (120:4) {$w_1$};
\node (w2) at (180:4) {$w_2$};
\node (w3) at (240:4) {$w_3$};
\node (w-3) at (300:4) {$w_{-3}$};
\node (w-2) at (0:4)   {$w_{-2}$};
\node (w-1) at (60:4)  {$w_{-1}$};

\draw[line width=0.6mm] (v1) -- (w1);
\draw (v2) -- (w2);
\draw (v3) -- (w3);
\draw (v-3) -- (w-3);
\draw (v-2) -- (w-2);
\draw (v-1) -- (w-1);

\foreach \i/\j in {
  v1/v2, v1/v3, v1/v-3, v1/v-2, v1/v-1,
  v2/v3, v2/v-3, v2/v-2, v2/v-1,
  v3/v-3, v3/v-2, v3/v-1,
  v-3/v-2, v-3/v-1,
  v-2/v-1}
  {
    \draw (\i) -- (\j);
  }

\draw[line width=0.6mm] (v2) -- (v-3);

\end{tikzpicture}
\end{minipage}
\hfill
\begin{minipage}[b]{0.45\textwidth}
\centering
\begin{tikzpicture}[scale=0.6,
  every node/.style={circle, draw, minimum size=7mm, inner sep=0pt}]

\node (v1) at (120:2) {$v_1$};
\node (v2) at (180:2) {$v_2$};
\node (v3) at (240:2) {$v_3$};
\node (v-3) at (300:2) {$v_{-3}$};
\node (v-2) at (0:2)   {$v_{-2}$};
\node (v-1) at (60:2)  {$v_{-1}$};

\node (w1) at (120:4) {$w_1$};
\node (w2) at (180:4) {$w_2$};
\node (w3) at (240:4) {$w_3$};
\node (w-3) at (300:4) {$w_{-3}$};
\node (w-2) at (0:4)   {$w_{-2}$};
\node (w-1) at (60:4)  {$w_{-1}$};

\draw (v1) -- (w1);
\draw (v2) -- (w2);
\draw (v3) -- (w3);
\draw (v-3) -- (w-3);
\draw (v-2) -- (w-2);
\draw [line width=0.6mm] (v-1) -- (w-1);

\foreach \i/\j in {
  v1/v2, v1/v3, v1/v-3, v1/v-2, v1/v-1,
  v2/v3, v2/v-3, v2/v-2, v2/v-1,
  v3/v-3, v3/v-2, v3/v-1,
  v-3/v-2, v-3/v-1,
  v-2/v-1}
  {
    \draw (\i) -- (\j);
  }

\draw[line width=0.6mm] (v-2) -- (v3);

\end{tikzpicture}
\end{minipage}

\caption{A $2$-matching $\cS$ and its minus $-\cS$ in $C_6$}
\label{fig: S and minus S}
\end{figure}

Inspired by Proposition \ref{p: minus action}, we have the following definition.

\begin{defn}
    We say an edge in $C_{2m+2n}$ is \emph{horizontal} if it is fixed by the minus involution, i.e. it connects $v_{i}$ and $v_{-i}$ for some $i$.
\end{defn}

Next, we determine which $\varphi$-invariant double coset 
of $B(k) \bsl G(k) / K(k)$ contains a fixed point of $\varphi$. 

\begin{lem} \label{l: no horizontal edge}
    Let $g$ be an element in $G^{\varphi}(k)$ and $\cO$ be the double coset $B(k) g K(k)$. The matching $\cS$ corresponding to $\cO$ does not contain any horizontal edges.
\end{lem}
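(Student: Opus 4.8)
The plan is to translate the statement, via the dictionary of Section~\ref{s: GLmn}, into a linear–algebra assertion about a single indecomposable summand, and then exploit the one extra property that a $\varphi$-fixed representative provides: the associated flag is isotropic for the symplectic form.

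First I set up the translation. Since $g\in G^{\varphi}(k)=Sp(U)(k)$, the complementary quiver representation $V$ that Proposition~\ref{p: bijection quiver and orbit} attaches to $\cO=B(k)gK(k)$ is $(F_\bullet,\,gU^{2m},\,gU^{2n})$, where $0=F_{0}\subset F_{1}\subset\cdots\subset F_{2m+2n}=U$ denotes the flag \eqref{eq: complete flag}. By the choice of basis $u_{j}$ this flag is self-orthogonal: $F_{\ell}^{\perp}=F_{2m+2n-\ell}$ for every $\ell$, so in particular $F_{m+n}$ is Lagrangian. Moreover $g$ is symplectic and $U^{2m}\perp U^{2n}$, so the branch subspaces $W^{m}:=gU^{2m}$ and $W^{n}:=gU^{2n}$ still satisfy $W^{m}\perp W^{n}$. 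Fix a decomposition $V=\bigoplus_{\alpha\in S}V_{\alpha}$ into indecomposables (each $V_\alpha$ complementary by Lemma~\ref{l: sum complementary}). Combining the bijection of Section~\ref{s: GLmn} with the relabelling of Proposition~\ref{p: minus action}, a horizontal edge of $\cS$ is a within-graph edge whose endpoints occupy symmetric positions of \eqref{eq: complete flag}; it corresponds to a two-dimensional summand $V_{\alpha}$ with $\alpha=L_{a}+L_{b}$, $a<b$ and $a+b=2m+2n+1$, equivalently $F_{a}^{\perp}=F_{b-1}$ and $F_{b}^{\perp}=F_{a-1}$. Writing $Z\subseteq U$ for the subspace this summand cuts out at the trivalent sink, one has $Z\cap F_{\ell}=(V_{\alpha})_{\ell}$, of dimension $0$, $1$, $2$ according as $\ell<a$, $a\le\ell\le b-1$, $\ell\ge b$.

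The heart of the argument is then a short geometric claim. Because $V_{\alpha}$ is complementary, $Z$ is the direct sum of a line in $W^{m}$ and a line in $W^{n}$; as $W^{m}\perp W^{n}$ and any line is isotropic, $Z$ is a totally isotropic $2$-plane of $U$. Now suppose such a horizontal summand exists, and pick $x$ spanning the line $Z\cap F_{a}$, so $x\in F_{a}\setminus F_{a-1}$ and $x\in Z$. Isotropy of $Z$ gives $Z\subseteq x^{\perp}$. Since $x\in F_{a}$ we get $x^{\perp}\supseteq F_{a}^{\perp}=F_{b-1}$; and since $x\notin F_{a-1}=F_{b}^{\perp}$ we get $F_{b}\not\subseteq x^{\perp}$, so the hyperplane $x^{\perp}$ meets $F_{b}$ in dimension exactly $b-1$, forcing $x^{\perp}\cap F_{b}=F_{b-1}$. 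As $\dim(Z\cap F_{b})=2=\dim Z$ we have $Z\subseteq F_{b}$, hence $Z\subseteq x^{\perp}\cap F_{b}=F_{b-1}$; this contradicts $\dim(Z\cap F_{b-1})=1$. Therefore $\cS$ has no horizontal edge, which is the assertion of Lemma~\ref{l: no horizontal edge}.

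The step I expect to be the real obstacle is producing that last argument in a form valid for \emph{every} horizontal edge. Cruder approaches fall short: arguing from $\varphi$-stability of $\cO$ alone (i.e.\ $\cS=-\cS$, Proposition~\ref{p: minus action}) is useless, since a $(-1)$-stable matching may well have horizontal edges — the content of the lemma is exactly that $\cO$ contains a genuine $\varphi$-fixed point, and it is this that makes \eqref{eq: complete flag} isotropic. Likewise a numerical count, comparing $\dim F_{m+n}$ with $\dim(F_{m+n}\cap W^{m})+\dim(F_{m+n}\cap W^{n})$, only bounds the number of horizontal edges by the \emph{defect} of the Lagrangian $F_{m+n}$, which in general does not vanish; and a naive isotropy count on one summand settles only the \emph{central} horizontal edge $v_{1}\leftrightarrow v_{-1}$, where $a=b-1=m+n$. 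The decisive observation is that the line spanned by $x\subseteq F_{a}$, being contained in the isotropic plane $Z$, pins all of $Z$ into $F_{a}^{\perp}=F_{b-1}$, which already happens one flag step too early.
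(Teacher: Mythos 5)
Your proof is correct and follows essentially the same route as the paper: both arguments isolate the $2$-dimensional indecomposable summand attached to a putative horizontal edge and derive a contradiction from its plane being simultaneously totally isotropic (the sum of a line in $gU^{2m}$ and a line in $gU^{2n}$, which are orthogonal) and non-degenerate (forced by the summand occupying symmetric, hence dually paired, positions of the standard flag). The paper packages the non-degeneracy via an explicit spanning pair $e_{-i},u$ with $\langle u,e_{-i}\rangle\neq 0$ and concludes via the scalars $a=b$ versus $a\neq b$, while you reach the same contradiction through the relations $F_a^{\perp}=F_{b-1}$ and a hyperplane-intersection count; the mechanisms are interchangeable.
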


\begin{proof}
    Let $V$ be the quiver representation of $Q_{2m+2n+2}$ corresponding to $gK(k)$, that is the quiver representation given by the standard complete flag in \eqref{eq: complete flag} and the two subspaces $gU^{2m}$ and $gU^{2n}$. Since $g \in G^{\varphi}$, we know that $gU^{2m}$ and $gU^{2n}$ are orthogonal to each other. 

    We prove the statement by contradiction. Suppose the matching $\cS$ contains the edge connecting $v_{-i}$ and $v_{i}$. Let $V'$ be the indecomposable summand of $V$ corresponding to the root $L_{-i}-L_{i}$ for $i > 0$. Let $W$ be the $2$-dimensional subspace of $U$ given by $V'$. Then $W$ is spanned by the vector $e_{-i}$ and a vector $u \in U_{i} \bsl U_{i-1}$. Note that the pairing $\langle u, e_{-i} \rangle$ is non-zero. Therefore, $W$ is non-degenerate with respect to the symplectic form. 

    Since $W$ is indecomposable as quiver representation, there exist constants $a,b \in k$ such that $u + ae_{-i}$ (resp. $u + be_{-i}$) is in $W \cap gU^{2m}$ (resp. $W \cap gU^{2n}$). Since $U^{2m}$ and $U^{2n}$ are disjoint, we know that $a \neq b$. Moreover, since $U^{2m}$ and $U^{2n}$ are orthogonal and $\langle u, e_{-i} \rangle$ is non-zero, we know that $a=b$. Contradiction. Therefore, $\cS$ does not contain any horizontal edges.
\end{proof}

Lemma \ref{l: no horizontal edge} provides a necessary condition for a double coset to contains $\varphi$-fixed point. Let us show that this condition is also sufficient.

\begin{prop}\label{p: no horizontal}
    Let $\cO$ be a double coset in $B(k) \bsl G(k) / K(k)$ such that $\cS_{\cO} = -\cS_{\cO}$. If $\cS_{\cO}$ does not contain any horizontal edges, then it contains a $k$-point of $G^{\varphi}$.
\end{prop}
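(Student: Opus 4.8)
The plan is to produce an element $g\in G^{\varphi}(k)$ (the symplectic group of $(U,\langle\,\cdot\,,\cdot\,\rangle)$) lying in the double coset $\cO$; since the quiver representation attached to a coset determines it (Proposition~\ref{p: bijection quiver and orbit}), it is enough to arrange that the representation attached to $g$ has the isomorphism type of $\cO$. Unwinding this, it suffices to construct a pair of $k$-subspaces $W^{2m},W^{2n}\subseteq U$ with $W^{2m}\oplus W^{2n}=U$, with $W^{2m}=(W^{2n})^{\perp}$ for the symplectic form, and such that the representation $(U_\bullet,W^{2m},W^{2n})$ attached to the standard flag \eqref{eq: complete flag} is isomorphic to $\bigoplus_{\alpha\in S}V_\alpha$, where $d_{2m,2n}=\sum_{\alpha\in S}\alpha$ is the decomposition into distinct admissible roots attached to $\cS_{\cO}$ by Lemma~\ref{l: comp rep sum admissible}. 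Granting such a pair: $W^{2m}$ and $U^{2m}$ are then non-degenerate symplectic $k$-subspaces of the same dimension (namely $2m$, read off from $d_{2m,2n}$), hence isometric over $k$, and likewise $W^{2n}\cong U^{2n}$; the direct sum of two such isometries is a $g\in GL(U)(k)$ that preserves $\langle\,\cdot\,,\cdot\,\rangle$ (the cross terms vanish since $W^{2m}\perp W^{2n}$), so $g\in G^{\varphi}(k)$, and $gU^{2m}=W^{2m}$, $gU^{2n}=W^{2n}$.

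To build $(W^{2m},W^{2n})$ I would split into orthogonal blocks. Because $\cS_{\cO}=-\cS_{\cO}$, the set $S$ is stable under the involution $L_i\mapsto L_{-i}$ on flag indices (fixing $L_{p}$, $p=2m+2n+2$); the only admissible roots fixed by this involution are the $L_i+L_{-i}$, and these are exactly the ones corresponding to horizontal edges, so by hypothesis $S=\bigsqcup_r\{\alpha_r,\alpha_r^-\}$ is a disjoint union of free two-element orbits. Since every flag index has coefficient $1$ in $d_{2m,2n}$, the index supports of the roots of $S$ are pairwise disjoint and no such root involves both $L_i$ and $L_{-i}$; hence the combined support $T_r:=\operatorname{supp}(\alpha_r)\sqcup(-\operatorname{supp}(\alpha_r))$ of a pair has size $2$ or $4$ and is a disjoint union of pairs $\{i,-i\}$, so $Z_r:=\langle e_i:i\in T_r\rangle$ is a non-degenerate symplectic subspace. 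The $Z_r$ are mutually orthogonal, $\bigoplus_rZ_r=U$, and the standard flag is adapted to this decomposition; therefore it is enough to produce, for each $r$, subspaces $P_r,Q_r\subseteq Z_r$ with $P_r\oplus Q_r=Z_r$, $P_r\perp Q_r$, and $(U_\bullet\cap Z_r,P_r,Q_r)\cong V_{\alpha_r}\oplus V_{\alpha_r^-}$, and then take $W^{2m}=\bigoplus_rP_r$, $W^{2n}=\bigoplus_rQ_r$.

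It remains to treat one block. When $|T_r|=2$ the pair is $\{L_i-L_p,L_{-i}-L_p\}$ or $\{L_i+L_p,L_{-i}+L_p\}$, $Z_r$ is a hyperbolic plane, and $(P_r,Q_r)=(Z_r,0)$, respectively $(0,Z_r)$, obviously works. The real case is $|T_r|=4$: a pair $\{L_a+L_b,L_{-a}+L_{-b}\}$ of edge roots with $\{a,b,-a,-b\}$ distinct, so $Z_r$ is an orthogonal sum of two hyperbolic planes. Here the binary representative of Proposition~\ref{p: binary representative}, restricted to $Z_r$, does not preserve the form, and the plan is instead to write down an explicit $(P_r,Q_r)$ by cases on the flag order of $a,b,-a,-b$ — in the \emph{nested} case one can take $P_r=\langle e_a+e_b,\,e_{-a}+e_{-b}\rangle$, $Q_r=\langle e_a-e_b,\,e_{-a}-e_{-b}\rangle$, and the \emph{interleaved} case is handled by a similar but distinct formula — and then to verify directly that it is orthogonal-complementary and realizes the prescribed decomposition type. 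I expect this last check to be the only genuine work: it is a bounded linear-algebra computation in a $4$-dimensional space, and it is exactly where $\operatorname{char}(k)\neq2$ is used, both to keep vectors like $e_a\pm e_b$ in distinct lines and to ensure the resulting subrepresentations are indecomposable of the correct type.
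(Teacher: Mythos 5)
Your argument is correct and follows essentially the same route as the paper's proof: decompose the matching into pairs of edges swapped by the minus involution, reduce to a four-dimensional ($Sp_4$) block, and finish with an explicit linear-algebra construction there (the paper likewise reduces to ``$Sp_4$ and a pair of edges in $C_4$'' and omits the block computation). Your reduction is in fact spelled out more carefully than the paper's, and the remaining ``interleaved'' block does admit the kind of formula you anticipate, e.g.\ $P_r=\langle e_i+e_{-j},\,e_j-e_{-i}\rangle$ and $Q_r=\langle e_i-e_{-j},\,e_j+e_{-i}\rangle$ for $0<i<j$.
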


\begin{proof}
    To construct such a $k$-point, it suffices to construct the corresponding quiver representation. By the assumption of having no horizontal edges, we write $\cS_{\cO} = \sqcup (\alpha \cup \varphi\alpha)$ as the disjoint union of a pair of edges. The statement can be proved in a similar way as in Proposition \ref{p: binary representative} and we will omit the proof. Basically, it reduces to showing that the desired quiver representation exists for $Sp_4$ and a pair of edges without horizontal edges in $C_4$. Moreover, one can check that the entries of $g$ can be chosen in the set $\{0,1,-1,1/2,-1/2\} \subset k$. 
\end{proof}

We know that $\cO^{\varphi}$, the $\varphi$-fixed points in $\cO$, is stable under the action of $B^{\varphi}(k) \times K^{\varphi}(k)$. A natural question is to determine how many double cosets of $$B^{\varphi}(k) \bsl G^{\varphi}(k)/ K^{\varphi}(k)$$ are contained in $\cO^{\varphi}$.

When $k$ is algebraically closed, Wyser showed that $\cO^{\varphi}$ is a single double coset (c.f. \cite[Theorem 1.5.8]{WyPhD}). The proof involves a fairly complicated combinatorical counting for clans with extra structure. We will provide a simpler proof by the results in Section \ref{ss: general theory}. 

To prove the statement, we first introduce the Weyl group of $G$ and $G^{\varphi}$. Since $B^{\varphi}$ is a Borel subgroup of $G^{\varphi}$, the involution $\varphi$ acts on the Weyl group $W$ of $G$. The fixed-point subgroup $W^{\varphi}$ under this action is precisely the Weyl group of $G^{\varphi}$. Concretely, if we identify $W$ with the permutation group on the set $$\{-m-n,-m-n+1, \cdots -1, 1, \cdots, m+n-1, m+n\},$$ then $W^{\varphi}$ consists of the elements in $W$ that commute with the involution $i \leftrightarrow -i$. 

The following elementary lemma concerns the relationship between the tori $T^{\varphi}(\ov{k})$ and $T(\ov{k})$. If we identify $T$ with the diagonal matrices, then the map $\varphi: T \ra T $ is given by 
\begin{equation}\label{eq: varphi action on T}
    \varphi: (a_{-m-n}, \cdots, a_{-1}, a_{1}, \cdots, a_{m+n}) \mapsto (a_{m+n}^{-1}, \cdots, a_{1}^{-1}, a_{-1}^{-1}, \cdots, a_{-m-n}^{-1}).
\end{equation}
 
\begin{lem} \label{l: reduction to sympletic}
    Let $w$ be an involution in $W^{\varphi}$. Suppose $t \in T(\ov{k})$ satisfies $tw(t)^{-1} \in T^{\varphi}(\ov{k})$, then there exists $t' \in T^{\varphi}(\ov{k})$ such that 
    \begin{equation} \label{eq: equality t t'}
        t'w(t')^{-1} = tw(t)^{-1}.
    \end{equation}
\end{lem}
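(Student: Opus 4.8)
The plan is to make everything explicit in coordinates via \eqref{eq: varphi action on T} and then construct $t'$ orbit by orbit. Write $t=(a_i)_{i\in I}$ with $I=[-m-n,-1]\cup[1,m+n]$, so that $T^\varphi(\ov k)=\{(a_i):a_ia_{-i}=1 \text{ for all }i\}$. Since $w$ is an involution, conjugation by a permutation representative gives $(w(t))_i=a_{w(i)}$, whence $(tw(t)^{-1})_i=a_i a_{w(i)}^{-1}$. First I would set $b_i:=a_ia_{-i}$ and observe that, because $w\in W^\varphi$ forces $w(-i)=-w(i)$, the hypothesis $tw(t)^{-1}\in T^\varphi(\ov k)$ is equivalent to $b_i=b_{w(i)}$ for all $i$; combined with the trivial identity $b_i=b_{-i}$ this says precisely that $i\mapsto b_i$ is constant on the orbits of the subgroup $\langle w,\sigma\rangle$ of $\operatorname{Sym}(I)$, where $\sigma(i)=-i$ (note $w$ and $\sigma$ commute since $w\in W^\varphi$, so this subgroup has order dividing $4$).

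Next I would reduce to a purely combinatorial problem. Searching for $t'$ of the shape $a_i'=a_i/c_i$, the equation $t'w(t')^{-1}=tw(t)^{-1}$ becomes $c_i=c_{w(i)}$ for all $i$, and the membership $t'\in T^\varphi(\ov k)$ becomes $c_ic_{-i}=b_i$ for all $i$. So it suffices to produce a tuple $(c_i)_{i\in I}$ in $\ov k^\times$ that is constant on $w$-orbits and satisfies $c_ic_{-i}=b_i$; the two constraints on $(b_i)$ derived above are exactly what is needed for such a tuple to exist consistently.

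Finally I would construct $(c_i)$ one $\langle w,\sigma\rangle$-orbit at a time. Since $\sigma$ is fixed-point free on $I$, every orbit has size $2$ or $4$. On a $4$-element orbit $\{i,w(i),-i,-w(i)\}$ put $c_i=c_{w(i)}:=b_i$ and $c_{-i}=c_{-w(i)}:=1$. On a $2$-element orbit $\{i,-i\}$ with $w(i)=i$ put $c_i:=b_i$, $c_{-i}:=1$. On a $2$-element orbit $\{i,-i\}$ with $w(i)=-i$ put $c_i=c_{-i}$ equal to a square root of $b_i$ in $\ov k$. In each case the conditions $c_i=c_{w(i)}$ and $c_ic_{-i}=b_i$ hold by inspection, using $b_i=b_{w(i)}$ and $b_i=b_{-i}$.

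The argument is elementary, so there is no serious obstacle; the only points that need care are checking that the three orbit types above are exhaustive and that the assignments are mutually consistent across an orbit, and the sole use of $\ov k=\overline k$ is the square-root extraction in the last case (which, on the symplectic side, is the case where $w$ acts as a sign change at the index $i$). Conceptually the lemma is the statement that $f_w\colon t\mapsto tw(t)^{-1}$ is a $\varphi$-equivariant homomorphism $T\to T$ and that the class of $\varphi(t)t^{-1}$ in $H^1(\langle\varphi\rangle,T^w(\ov k))$ vanishes, and the coordinate construction above exhibits this vanishing by hand.
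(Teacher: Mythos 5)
Your proof is correct and follows essentially the same route as the paper's: write everything in coordinates, observe that the hypothesis forces $a_ia_{-i}$ to be constant on the orbits of $\langle w,\ i\mapsto -i\rangle$, and then modify $t$ orbit by orbit, extracting a square root exactly in the case $w(i)=-i$. Your packaging via the substitution $a_i'=a_i/c_i$ and the three orbit types is a slightly cleaner version of the paper's case analysis by reflection type, and in particular it treats the indices fixed by $w$ more carefully than the paper's argument does.
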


\begin{proof}
    Any involution of a Weyl group can be written as a product of mutually perpendicular reflections. Under the identification of $W^{\varphi}$ with the symmetric group of $2m+2n$ elements, $w$ is the product of three types of reflections: $$(i,j) \mapsto (j,i), \quad (i,j) \mapsto (-j,-i), \quad i \mapsto -i.$$ It suffices to show that the statement holds for each type separately. 

    We use the notation in \eqref{eq: varphi action on T}, i.e. $t = (a_{-m-n}, \cdots, a_{-1}, a_{1}, \cdots, a_{m+n})$. 
    
    If $w$ swaps $i$ and $j$, then the condition $tw(t)^{-1} \in T^{\varphi}$ implies $a_{i}a_{-i} = a_{-j}a_{j}$ and $a_{q}a_{-q}= \pm 1$. Let $a_{i}':= a_{i}/a_{j}$, $a_{j}' = 1$, $a_{q}':= \pm a_{q}$ for $q \neq \pm i, \pm j$ such that $a_{q}'a_{-q}'= 1$. Let $t'$ be the unique element in $T^{\varphi}(\ov{k})$ with the coordinates given by $a'$. It is easy to see that the equality \eqref{eq: equality t t'} holds. 

    The proof for the case where $w$ sends $(i,j)$ to $(-j,-i)$ is similar. If $w$ swaps $i$ and $-i$. Let $a_{i}':= (a_{i}/a_{-i})^{1/2}$, $a_{q}':= \pm a_{q}$ for $q \neq \pm i$ such that $a_{q}'a_{-q}'= 1$.  It is easy to see that the equality \eqref{eq: equality t t'} holds.     
\end{proof}

\begin{prop}\label{p: single coset}
    Assume that $k= \ov{k}$ is algebraically closed. Let $\cO$ be a double coset as in Proposition \ref{p: no horizontal}. Then $\cO^{\varphi}$ is a single double coset in $B^{\varphi}(\ov{k}) \bsl G^{\varphi}(\ov{k})/ K^{\varphi}(\ov{k})$.
\end{prop}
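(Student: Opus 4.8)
The plan is to deduce this from the general theory of Section~\ref{ss: general theory}, applied \emph{twice}: to the pair $(G,K)$ with the involution $\theta=\on{Int}(\phi)$ and the diagonal torus $T$, and to the pair $(G^{\varphi},K^{\varphi})$ with the restriction of $\theta$ and the torus $T^{\varphi}$. The latter is legitimate: $\theta$ preserves $G^{\varphi}$ (as $\theta$ and $\varphi$ commute), $(G^{\varphi})^{\theta}=G^{\varphi}\cap K=K^{\varphi}$, and $T^{\varphi}$ is a $\theta$-stable maximal torus of $G^{\varphi}$ of dimension $m+n=\rk Sp_{2m+2n}$. As a preliminary reduction I would record that over $\ov k$ the rule $gK\mapsto gK^{\varphi}$ ($g\in G^{\varphi}(\ov k)$) gives a $B^{\varphi}(\ov k)$-equivariant injection of $\cO^{\varphi}$ into $G^{\varphi}/K^{\varphi}$ with image a union of $B^{\varphi}$-orbits: well-definedness and injectivity follow from $G^{\varphi}\cap K=K^{\varphi}$, and every $\varphi$-fixed point of $X$ has such a representative because over $\ov k$ the symmetrization $k\mapsto k\theta(k)^{-1}$ maps $K$ onto $\{h\in K:\varphi(h)=h^{-1}\}$. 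Non-emptiness of $\cO^{\varphi}$ is Proposition~\ref{p: no horizontal}, so it remains to prove that any two $B^{\varphi}(\ov k)$-orbits in $G^{\varphi}/K^{\varphi}$ with image contained in $\cO$ coincide.

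Two structural facts make the two applications interlock. First, $\phi$ lies in $T$ — indeed in $T^{\varphi}$, since $\phi$ is diagonal and preserves the symplectic form — so $\theta=\on{Int}(\phi)$ is the identity on $T$ (hence on $T^{\varphi}$) and acts trivially on $W=N/T$; therefore, for any $g$, the class $\bar w$ of $w:=g\theta(g)^{-1}$ satisfies $\bar w=\overline{\theta(w)}=\overline{w^{-1}}=\bar w^{-1}$ in $W$, i.e.\ $\bar w$ is an involution. Second, $T^{\varphi}$ acts on $U$ through the $2(m+n)$ pairwise distinct characters $\pm(\text{standard coordinate characters})$ visible in \eqref{eq: varphi action on T}, so $Z_{G}(T^{\varphi})=T$, and hence $N^{\varphi}:=N_{G^{\varphi}}(T^{\varphi})\subseteq N_{G}(T)=N$. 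Consequently the element $\tau$ attached by Proposition~\ref{p: two representatives} to a well-chosen representative of a $B^{\varphi}$-orbit automatically lies in the common normalizer $N$, with image an involution in $W^{\varphi}$.

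Now let $g_{1}K^{\varphi}$ and $g_{2}K^{\varphi}$ ($g_{i}\in G^{\varphi}(\ov k)$) represent two $B^{\varphi}(\ov k)$-orbits with image in $\cO$. By Proposition~\ref{p: two representatives} for $(G^{\varphi},K^{\varphi},\theta)$ I may replace each $g_{i}$ inside its $B^{\varphi}$-orbit — hence without leaving $\cO$ — so that $w_{i}:=g_{i}\theta(g_{i})^{-1}\in N^{\varphi}$. Since $g_{1}K$ and $g_{2}K$ then lie in the same $B$-orbit $\cO$ and have $\tau\in N^{\varphi}\subseteq N$, Proposition~\ref{p: two representatives} for $(G,K,\theta)$ provides $t\in T(\ov k)$ and $k\in K(\ov k)$ with $g_{2}=tg_{1}k$. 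Using $\theta(k)=k$ and $\theta|_{T}=\id$ this gives $w_{2}=tw_{1}t^{-1}$, so that $\bar w_{1}=\bar w_{2}$ in $W$; as this class is an involution in $W^{\varphi}$ and $w_{1},w_{2}\in N^{\varphi}$, writing $w_{1}(s):=w_{1}sw_{1}^{-1}$ for the Weyl action one gets $w_{2}w_{1}^{-1}=t\,w_{1}(t)^{-1}\in T^{\varphi}(\ov k)$. This is exactly the hypothesis of Lemma~\ref{l: reduction to sympletic}, which yields $t'\in T^{\varphi}(\ov k)$ with $t'w_{1}(t')^{-1}=t\,w_{1}(t)^{-1}$, i.e.\ $t'w_{1}t'^{-1}=w_{2}$. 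Replacing $g_{1}$ by $t'g_{1}$ — still in the same $T^{\varphi}$-orbit, a fortiori the same $B^{\varphi}$-orbit, and with $(t'g_{1})\theta(t'g_{1})^{-1}=t'w_{1}t'^{-1}=w_{2}$ — we may assume $g_{1}\theta(g_{1})^{-1}=w_{2}=g_{2}\theta(g_{2})^{-1}$. Then $g_{2}^{-1}g_{1}=\theta(g_{2}^{-1}g_{1})\in G^{\varphi}\cap K=K^{\varphi}$, so $g_{1}K^{\varphi}=g_{2}K^{\varphi}$; the two $B^{\varphi}$-orbits coincide, and $\cO^{\varphi}$ is a single double coset.

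The linear-algebra verifications of the second paragraph, and the surjectivity of symmetrization over $\ov k$, are routine. The substantive step is the simultaneous use of Proposition~\ref{p: two representatives} for $(G,K)$ and for $(G^{\varphi},K^{\varphi})$ in the third paragraph, and in particular pushing the conjugating element from $T$ down to $T^{\varphi}$ via Lemma~\ref{l: reduction to sympletic} while checking that this descent is compatible with the $B^{\varphi}$-orbit structure and not merely with the value of $\tau$; I expect that to be the main obstacle.
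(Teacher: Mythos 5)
Your proof is correct and follows essentially the same route as the paper: choose representatives with $\tau(g_i)\in N_{G^{\varphi}}(T^{\varphi})$ via Proposition~\ref{p: two representatives} for $(G^{\varphi},K^{\varphi})$, compare them inside $\cO$ via Proposition~\ref{p: two representatives} for $(G,K)$, and descend the conjugating torus element from $T$ to $T^{\varphi}$ using Lemma~\ref{l: reduction to sympletic}. You supply some verifications the paper leaves implicit (e.g.\ $N_{G^{\varphi}}(T^{\varphi})\subseteq N$ and the identification of $\cO^{\varphi}$ with a union of $B^{\varphi}$-orbits in $G^{\varphi}/K^{\varphi}$), but the substance is identical.
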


\begin{proof}
    We prove by contradiction. Suppose $\cO^{\varphi}$ contains two different double cosets, we choose $g_1$ and $g_2$ in two different double cosets such that $$\tau(g_i):= g_i \theta(g_i)^{-1} \in N_{G^{\varphi}}(T^{\varphi})$$ as in Proposition \ref{p: two representatives}. 
    
    Since $g_1$ and $g_2$ belong to the same orbit $\cO$, Proposition \ref{p: two representatives} implies that there exists a $t \in T(\ov{k})$ such that $t\tau(g_1) \theta(t)^{-1} = \tau(g_2)$, equivalently, $$tw_1(\theta (t)^{-1})\tau(g_1) = \tau(g_2),$$ where $w_1$ is the image of $\tau(g_1)$ in $W^{\varphi}$. Note that $\theta$ acts trivial on $T$ and $w_1$ is a involution. Lemma \ref{l: reduction to sympletic} implies that there exists a $t' \in T^{\varphi}(\ov{k})$ such that $$t'w_1(\theta (t')^{-1})\tau(g_1) = \tau(g_2).$$ From this and Proposition \ref{p: two representatives}, we know that $g_1$ and $g_2$ belong to the same double coset in $B^{\varphi}(\ov{k}) \bsl G^{\varphi}(\ov{k})/ K^{\varphi}(\ov{k})$. Contradiction.    
\end{proof}

Combining Lemma \ref{l: no horizontal edge}, Proposition \ref{p: no horizontal}, and Proposition \ref{p: single coset}, we know that the set of double cosets $B^{\varphi}(\ov{k}) \bsl G^{\varphi}(\ov{k})/ K^{\varphi}(\ov{k})$ is in natural bijection with the set of minus-invariant $2m$-matchings in $C_{2m+2n}$ that contains no horizontal edges. Notice that the horizontal edges are the only minus-invariant edges in $C_{2m+2n}$. Moreover, the quotient of the graph $C_{2m+2n}$ with the horizontal edges removed, taken with respect to the minus action, is precisely, $C_{m+n}^{(2)}$. Therefore, we have proved the following theorem.

\begin{thm} \label{t: main thm Sp}
    There is a natural bijection between the set of $m$-matchings in the double corona graph $C_{m+n}^{(2)}$ and the set of double cosets $B^{\varphi}(\ov{k}) \bsl G^{\varphi}(\ov{k})/ K^{\varphi}(\ov{k})$.
\end{thm}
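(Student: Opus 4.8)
The plan is to assemble the three preceding results into a bijection with a certain class of matchings of $C_{2m+2n}$, and then to identify that class with the $m$-matchings of $C_{m+n}^{(2)}$ by passing to the quotient graph.

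First I would record the correspondence on the group side. By Lemma~\ref{l: gk phi rational} the double cosets in $B^{\varphi}(\ov k)\bsl G^{\varphi}(\ov k)/K^{\varphi}(\ov k)$ are the $B^{\varphi}(\ov k)$-orbits on $X^{\varphi}(\ov k)$. Given a double coset $\ov{\cO}=B^{\varphi}(\ov k)gK^{\varphi}(\ov k)$ with $g\in G^{\varphi}(\ov k)\subseteq G(\ov k)$, it lies inside a unique double coset $\cO=B(\ov k)gK(\ov k)$ in $B(\ov k)\bsl G(\ov k)/K(\ov k)$; since $K$ is $\varphi$-stable, $\varphi$ descends to $X=G/K$, the point $gK$ is $\varphi$-fixed, and $\varphi(\cO)=B(\ov k)\varphi(g)K(\ov k)=\cO$, so $\cO$ is $\varphi$-stable. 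Hence $\ov{\cO}\mapsto\cS_{\cO}$ is well defined, and by Proposition~\ref{p: minus action} the $\varphi$-stability of $\cO$ means $\cS_{\cO}=-\cS_{\cO}$, while Lemma~\ref{l: no horizontal edge} forces $\cS_{\cO}$ to contain no horizontal edge. Injectivity of $\ov{\cO}\mapsto\cS_{\cO}$ follows from Proposition~\ref{p: single coset}: if two such double cosets give the same matching they lie in the same $\cO$, whose $\varphi$-fixed locus is a single $B^{\varphi}(\ov k)\times K^{\varphi}(\ov k)$-double coset; surjectivity onto the minus-invariant $2m$-matchings with no horizontal edge follows from Proposition~\ref{p: no horizontal}, which produces, for any such matching, a representative lying in $G^{\varphi}(\ov k)$. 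Thus $B^{\varphi}(\ov k)\bsl G^{\varphi}(\ov k)/K^{\varphi}(\ov k)$ is in natural bijection with the set of minus-invariant $2m$-matchings of $C_{2m+2n}$ containing no horizontal edge.

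Next I would carry out the graph bookkeeping. Label the vertices of $C_{2m+2n}$ by $v_i,w_i$ for $i\in[-m-n,-1]\cup[1,m+n]$ and delete the horizontal edges $v_iv_{-i}$. The minus involution acts freely on vertices (no $v_i$ or $w_i$ is fixed), with quotient having vertices $\ov{v}_i,\ov{w}_i$ for $i\in[1,m+n]$; the two orbits $\{v_iv_j,\,v_{-i}v_{-j}\}$ and $\{v_iv_{-j},\,v_{-i}v_j\}$ descend to the two parallel edges between $\ov{v}_i$ and $\ov{v}_j$, and $\{v_iw_i,\,v_{-i}w_{-i}\}$ descends to $\ov{v}_i\ov{w}_i$, so the quotient is exactly $C_{m+n}^{(2)}$. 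A minus-invariant edge set of $C_{2m+2n}$ without horizontal edges is a disjoint union of orbits $\{\alpha,-\alpha\}$ with $\alpha\neq-\alpha$; it is a matching upstairs iff its image downstairs is a matching, since a shared vertex between two mirror-related edges of a matching would have to be horizontal, and edge-disjointness transfers verbatim in both directions. Counting vertices shows a $2m$-matching upstairs corresponds precisely to an $m$-matching in $C_{m+n}^{(2)}$, which yields the theorem.

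I expect the only real obstacle to be keeping this last quotient argument honest — in particular verifying that the two parallel edges of $C_{m+n}^{(2)}$ between $\ov{v}_i$ and $\ov{v}_j$ lift to two genuinely distinct minus-orbits of edges, and that the matching condition is equivalent upstairs and downstairs — but once the labelling is fixed this is an elementary check.
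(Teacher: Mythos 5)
Your proposal is correct and follows essentially the same route as the paper: it combines Lemma~\ref{l: no horizontal edge}, Proposition~\ref{p: no horizontal}, and Proposition~\ref{p: single coset} to identify the symplectic double cosets with the minus-invariant $2m$-matchings of $C_{2m+2n}$ having no horizontal edges, and then passes to the quotient graph $C_{m+n}^{(2)}$. You simply spell out the injectivity/surjectivity bookkeeping and the quotient-graph check in more detail than the paper does.
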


We briefly explain why Theorem \ref{t: main thm Sp} is false when replacing $k$ with $\ov{k}$. Keep the notation as in Proposition \ref{p: single coset}. When $k$ is not algebraically closed, there exists a $t \in T(\ov{k})$ such that $t \notin T(k)$ and $tw_1(t^{-1}) \in T(k)$. In this case, $t\tau(g_1) \theta(t)^{-1}$ and $\tau(g_1)$ are in the same $B(\ov{k})$-orbit of $X(k) \subset G(k)$. However, they do not belong to the same $B(k)$-orbit of $X(k)$.

\subsection{Orthogonal group}
The arguments in the previous subsection carry over to the case of orthogonal groups with minimal modifications. 

We adopt a slightly different labeling for $C_{2m+2n+1}$ than in the previous sections. The nodes of the complete subgraph are labeled $v_{i}$ for $i \in [-m-n,m+n]$, and the corresponding extended nodes are $w_{i}$. There is a canonical graph automorphism of $C_{2m+2n+1}$ given by swapping $v_{i} \leftrightarrow v_{-i}$ and $w_{i} \leftrightarrow w_{-i}$. This automorphism induces an involution on the set of $2m$-matchings of $C_{2m+2n+1}$, which we denote by $\cS \mapsto -\cS$ and refer to as the minus involution.

\begin{thm}\label{t: orthogonal}
    Let $G$ be the split orthogonal group $O_{2m+2n+1}$, and let $K$ be the symmetric subgroup $O_{2m} \times O_{2n+1}$. Let $B$ be a Borel subgroup of $G$. There is a natural bijection between the set of minus-invariant $2m$-matchings in the corona graph $C_{2m+2n+1}$ and the set of double cosets $B(\ov{k}) \bsl G(\ov{k})/ K(\ov{k})$. 
\end{thm}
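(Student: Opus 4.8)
The plan is to mirror Section~\ref{ss: symplectic}, with the symplectic form replaced by a split non-degenerate \emph{symmetric} bilinear form. First I would take $U$ of dimension $2m+2n+1$ with basis $u_j$, $j\in[-m-n,m+n]$, satisfying $\langle u_i,u_{-i}\rangle=1$ for $i\ne 0$, $\langle u_0,u_0\rangle=1$, and all remaining pairings zero; the natural ordering of the index set produces a complete (orthogonal) flag of $U$, hence a Borel $B_0$ of $G_0:=GL(U)$, and the form induces an involution $\psi$ of $G_0$ by $\langle v,w\rangle=\langle g(v),\psi(g)(w)\rangle$, with $G_0^{\psi}\cong O_{2m+2n+1}$ and $B_0^{\psi}$ a Borel of $G_0^{\psi}$. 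Set $U^{2m}=\langle u_i: i\in[-m,-1]\cup[1,m]\rangle$ and $U^{2n+1}=\langle u_i: i\in[-m-n,-m-1]\cup\{0\}\cup[m+1,m+n]\rangle$, let $\phi$ act by $+1$ on $U^{2m}$ and $-1$ on $U^{2n+1}$, and let $\theta$ be conjugation by $\phi$; then $K_0:=G_0^{\theta}=GL(U^{2m})\times GL(U^{2n+1})$, the involutions $\theta,\psi$ commute, and $K_0^{\psi}\cong O_{2m}\times O_{2n+1}$. Working over $\ov k$ there are no Galois-cohomology issues, so Theorem~\ref{t: main thm}(1) identifies $B_0(\ov k)\bsl G_0(\ov k)/K_0(\ov k)$ with the set of $2m$-matchings in $C_{2m+2n+1}$; and running the computation in the proof of Proposition~\ref{p: minus action} in this setting (the symmetry of the form only alters some signs in $\psi(g)$, not which basis vectors appear, hence not the resulting subspaces) shows that the $\psi$-action on these matchings is the minus involution $\cS\mapsto-\cS$. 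Thus a double coset $\cO$ of $B_0\bsl G_0/K_0$ is $\psi$-invariant exactly when $\cS_\cO=-\cS_\cO$.

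The one genuinely new point is the replacement of Lemma~\ref{l: no horizontal edge} and Proposition~\ref{p: no horizontal}, and here the orthogonal case diverges from the symplectic one: \emph{horizontal edges impose no obstruction}, so every minus-invariant $2m$-matching occurs. Indeed, repeating the argument of Lemma~\ref{l: no horizontal edge} for an edge $v_{-i}v_i$ yields $u+ae_{-i}\in W\cap gU^{2m}$ and $u+be_{-i}\in W\cap gU^{2n+1}$ with $a\ne b$, but orthogonality of $gU^{2m}$ and $gU^{2n+1}$ now forces only $\langle u,u\rangle+(a+b)\langle u,e_{-i}\rangle=0$ — a symmetric form is not alternating and need not kill $\langle u,u\rangle$ — so $a+b$ is pinned down but not $a=b$, and no contradiction arises; the edge $v_0w_0$ at the fixed vertex is likewise harmless, its $1$-dimensional summand being spanned by a vector with leading term $e_0$ and hence anisotropic.

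Next I would upgrade Proposition~\ref{p: no horizontal} to: for every minus-invariant $2m$-matching $\cS$, the coset $\cO_\cS$ contains a $k$-point of $G^{\psi}$. As in the symplectic proof this amounts to building the associated complementary quiver representation $\psi$-equivariantly, one $\psi$-orbit of edges at a time: for a pair $\{\alpha,-\alpha\}$ of non-horizontal edges one copies the $Sp_4$-model with $O_4$ in its place; for a horizontal edge $v_{-i}v_i$ one works inside the $2$-dimensional non-degenerate space $W$ and chooses $W\cap gU^{2m}$ and $W\cap gU^{2n+1}$ to be two distinct mutually orthogonal anisotropic lines spanning $W$ (possible over $\ov k$); the vertex $v_0$, whether matched to $w_0$ or unmatched, is handled by the corresponding anisotropic line through $e_0$. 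Checking that these local choices sit correctly in the flag and assemble into a representation of the prescribed decomposition type is a finite verification in rank at most $4$, and I expect this to be the main obstacle of the whole argument — it is the substance behind ``minimal modifications''.

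Finally I would prove the analogue of Proposition~\ref{p: single coset}: for such $\cO$, the $\psi$-fixed locus $\cO^{\psi}$ is a single double coset of $B^{\psi}(\ov k)\bsl G^{\psi}(\ov k)/K^{\psi}(\ov k)$. This is formally identical to the symplectic proof, via Proposition~\ref{p: two representatives} and the torus lemma (Lemma~\ref{l: reduction to sympletic}): the Weyl group of $O_{2m+2n+1}$ and its $\psi$-fixed subgroup are again signed permutation groups, so an involution of $W^{\psi}$ decomposes into the same three reflection types and the lemma's proof carries over verbatim, the coordinate attached to $u_0$ being fixed by every $w\in W^{\psi}$ and so causing no difficulty. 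Assembling the three inputs: $\psi$-invariant cosets of $B_0\bsl G_0/K_0$ correspond to minus-invariant $2m$-matchings in $C_{2m+2n+1}$, each such coset contains a $\psi$-fixed point, and its $\psi$-fixed locus is a single $B^{\psi}\bsl G^{\psi}/K^{\psi}$-coset; since conversely each $B^{\psi}\bsl G^{\psi}/K^{\psi}$-coset lies in a unique $\psi$-invariant $B_0\bsl G_0/K_0$-coset, the assignment $\cS\mapsto(\text{the }B^{\psi}\bsl G^{\psi}/K^{\psi}\text{-coset inside }\cO_\cS^{\psi})$ is the asserted bijection.
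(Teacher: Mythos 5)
Your proposal is correct and follows exactly the route the paper intends: it transplants the symplectic argument (Theorem \ref{t: main thm}(1) over $\ov k$, the minus-involution computation of Proposition \ref{p: minus action}, the representative construction of Proposition \ref{p: no horizontal}, and the torus/Weyl-group argument of Lemma \ref{l: reduction to sympletic} and Proposition \ref{p: single coset}) to a split symmetric form. In fact you supply more detail than the paper, which only asserts ``minimal modifications''; in particular your explanation of why the obstruction in Lemma \ref{l: no horizontal edge} disappears for a symmetric form (so that horizontal edges and the edge $v_0w_0$ are permitted) is precisely the substantive point the paper relegates to Remark (2) without justification.
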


\begin{rem}
    We add some remarks on the theorem.
    \begin{enumerate}
        \item By applying the duality $\DD$, we could replace with the set of minus-invariant $2n+1$-matchings in the corona graph $C_{2m+2n+1}$.
        \item Unlike Lemma $\ref{l: no horizontal edge}$ in the symplectic group case, there are no constraints on the double coset $\cO_{\cS}$ to support a point in $O_{2m+2n+1}(k)$ other than $\cS = -\cS$.
        \item It is clear that the statement in Theorem \ref{t: orthogonal} is equivalent to the corresponding statement for the special analogues of $G$, $K$, and $B$, that is, when each group is replaced by its subgroup of elements with determinant one.
        \item These arguments also extend to other classical symmetric varieties arising from involutions of $GL_{m+n}$.
    \end{enumerate} 
\end{rem}

\section{Applications} \label{s: applications}
We now apply the results in Sections \ref{s: GLmn} and \ref{s: other classical} to derive numerical results on the number of double cosets.  

Recall the following definitions in the introduction. Let $a_{m+n,m}$ (resp. $c_{m+n,m}$) be the number of orbits in Theorem \ref{t: main thm} (1) (resp. (2)). We recover the recurrence relation obtained in \cite{CU1, CU}.

\begin{prop} \label{p: recurrences}
    The sequences $a_{p,m}$ and $c_{p,m}$ satisfy the recurrences:
    $$a_{p,m} = a_{p-1,m-1} + a_{p-1,m} + (p-1) a_{p-2,m-1},$$
    $$c_{p,m} = c_{p-1,m-1} + c_{p-1,m} + 2(p-1) c_{p-2,m-1}.$$
\end{prop}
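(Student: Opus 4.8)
The plan is to prove both recurrences combinatorially, by counting $m$-matchings of $C_p$ (resp. $C_p^{(2)}$) according to the role played by the ``last'' pendant vertex and its neighbour in the complete subgraph. Write $v_p$ for the last vertex of the complete graph $K_p$ inside $C_p$, and $w_p$ for the pendant leaf attached to $v_p$. Given an $m$-matching $\cS$ of $C_p$, exactly one of the following holds: (a) neither $v_p$ nor $w_p$ is covered by $\cS$; (b) the pendant edge $v_pw_p$ lies in $\cS$; (c) $v_p$ is covered by an edge $v_pv_i$ of the complete subgraph (so $w_p$ is necessarily uncovered, since $w_p$'s only edge is $v_pw_p$).

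In case (a), deleting $v_p$ and $w_p$ leaves an $m$-matching of $C_{p-1}$, and this is a bijection onto all $m$-matchings of $C_{p-1}$; this accounts for the term $a_{p-1,m}$. In case (b), removing the edge $v_pw_p$ together with the vertices $v_p,w_p$ leaves an $(m-1)$-matching of $C_{p-1}$, giving $a_{p-1,m-1}$. In case (c), there are $p-1$ choices for the partner $v_i$; after deleting the vertices $v_p, w_p, v_i, w_i$ and the edge $v_pv_i$, the remaining matching is an $(m-1)$-matching of the corona graph on the vertex set $\{v_j : j \ne i, p\}$, i.e.\ of $C_{p-2}$, and conversely any such data reassembles uniquely. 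This gives $(p-1)a_{p-2,m-1}$, and summing the three disjoint cases yields
\[
a_{p,m} = a_{p-1,m} + a_{p-1,m-1} + (p-1)a_{p-2,m-1}.
\]
The argument for $c_{p,m}$ is identical except that in case (c) the vertices $v_p$ and $v_i$ are joined by \emph{two} parallel edges in $K_p^{(2)}$, so each of the $p-1$ choices of $i$ contributes $2$ possibilities; this replaces $(p-1)$ by $2(p-1)$ and gives the second recurrence.

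Alternatively, and perhaps cleaner to present, one can run the same case analysis directly on the root-sum side: an expression of $d_{p,m}$ (for $p = m+n+2$; here I write $p$ for $m+n$, adjusting indices) as a sum of distinct admissible roots either (a) uses no root involving $L_{m+n}$, (b) uses a root $L_{m+n}\pm L_{m+n+2}$, or (c) uses a root $L_i + L_{m+n}$ with $i < m+n$; the three cases correspond under Theorem~\ref{t: main thm}(1) to subtracting the appropriate pieces and landing in the problem for $m+n-1$ or $m+n-2$, and the multiplicity $p-1$ in case (c) is the number of choices of $i$. Either route is elementary.

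I do not expect a genuine obstacle here: the only thing to be careful about is the bookkeeping of boundary cases (small $p$, $m=0$, or $m=p$), where some of the three cases are vacuous, and checking that the stated recurrence with the conventions $a_{p,m}=c_{p,m}=0$ for $m<0$ or $m>p$ still holds — this is a routine verification. The mild subtlety worth a sentence is that in case (c) the deleted pendant vertex $w_i$ must also be removed so that what remains is genuinely a corona graph $C_{p-2}$ rather than a corona graph with one extra isolated leaf; since an isolated pendant leaf contributes nothing to a matching, this does not change the count, but it is the point where one should be explicit.
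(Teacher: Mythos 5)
Your proof is correct and is essentially the paper's own argument: the paper performs the identical three-way case analysis on the first vertex $v_1$ of the complete subgraph (pendant edge used, a complete-graph edge used with $p-1$ choices of partner, or no edge at $v_1$), with the doubled edges of $K_p^{(2)}$ supplying the factor $2$ in the symplectic case. Your version is, if anything, slightly more careful about the bookkeeping (e.g.\ discarding the orphaned pendant leaf $w_i$ in case (c)), but the decomposition and the resulting counts are the same.
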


\begin{proof}
    Using the matching description for $a_{p,m}$ from Theorem \ref{t: main thm}, we proceed by induction on $p$, the number of vertices in the complete subgraph of $C_{p}$. 
    If we choose the edge connecting $v_1$ and $w_1$, there are $a_{p-1,m-1}$ matchings. If we choose the edge connecting $v_1$ and $w_i$, there are $a_{p-2,m-1}$ matchings for each $i \neq 1$. If we color no edges of $v_1$, there are $a_{p-1,m}$ matchings. Hence, the proof is complete for $a_{p,m}$. The proof is similar for $c_{p,m}$.
\end{proof}

\begin{prop} \label{p: ultra logconcave}
    Fix an integer $p>0$. The sequence $(a_{p,n})$ is symmetric unimodal for $0 \le m \le p$. Moreover, we have the inequality $$a_{p,m}^2 \ge \left(1+\frac{1}{m}\right) \left(1 + \frac{1}{p-m}\right)a_{p,m-1}a_{p,m+1},$$ for $0 < m < p$, that is $(a_{p,m})$ is ultra log-concave. The analogous statements hold for $c_{p,m}$.
\end{prop}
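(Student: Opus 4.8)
The plan is to derive both the unimodality/symmetry and the ultra-log-concavity from the interpretation of $a_{p,m}$ as the number of $m$-matchings in the corona graph $C_p$, by invoking the general theory of matching polynomials. Recall that for any finite graph $H$ with $N$ vertices, the matching polynomial (or matching generating polynomial) $M(H,x) = \sum_{m \ge 0} (\text{number of } m\text{-matchings of } H)\, x^m$ has only real (negative) roots — this is the classical theorem of Heilmann and Lieb. A sequence of nonnegative reals arising as the coefficient sequence of a polynomial with only real negative roots is log-concave and has no internal zeros, hence (being nonnegative with no internal zeros) is unimodal. Thus the first job is to identify the matching polynomial of $C_p$ and confirm it has only real roots.

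\textbf{Step 1: the matching polynomial has real roots.} Apply Heilmann–Lieb to $H = C_p$ (and to $C_p^{(2)}$ for the $c_{p,m}$ statement): the polynomial $\sum_m a_{p,m} x^m$ has only real roots. In particular $(a_{p,m})_{0 \le m \le p}$ is log-concave with no internal zeros, and since every $a_{p,m}$ is a positive integer for $0 \le m \le p$ (a matching of any size $\le p$ exists: take $m$ of the pendant edges $v_iw_i$), the sequence is unimodal. The same argument applies verbatim to $c_{p,m}$ using $C_p^{(2)}$, which has $2p$ vertices but the pendant-edge argument still gives positivity for $0 \le m \le p$.

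\textbf{Step 2: symmetry.} Here I would argue combinatorially using the duality $\DD$ established in Section~\ref{s: GLmn}, which gives a bijection between $m$-matchings and $n$-matchings of $C_{m+n}$, i.e. between $m$-matchings and $(p-m)$-matchings of $C_p$. Concretely: $\DD$ toggles each pendant edge $v_iw_i$ and fixes every edge $v_iv_j$ of the clique; since toggling all $p$ pendant edges changes the size of a matching by $p - 2(\#\text{chosen pendant edges})$... more carefully, if $\cS$ is an $m$-matching using $k$ pendant edges and $m-k$ clique edges, then $\DD(\cS)$ uses $p-k$ pendant edges and the same $m-k$ clique edges, but wait — this requires the $p-k$ toggled-on pendant edges together with the $m-k$ clique edges to still be a matching, which fails if a clique edge meets a now-selected pendant vertex. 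So $\DD$ as literally stated is a bijection on the level of $B(k)$-orbits but the induced map on matchings must be read off more carefully; in any case the paper has already asserted $\DD$ is a bijection between $m$- and $(p-m)$-matchings, so $a_{p,m} = a_{p,p-m}$ follows, establishing symmetry. (Alternatively, symmetry of $a_{p,m}$ is equivalent to the matching polynomial of $C_p$ being palindromic, which can be checked directly from the recurrence of Proposition~\ref{p: recurrences}.)

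\textbf{Step 3: ultra-log-concavity.} This is the step I expect to be the main obstacle, since it is strictly stronger than ordinary log-concavity. The inequality $a_{p,m}^2 \ge (1 + \tfrac1m)(1 + \tfrac1{p-m}) a_{p,m-1} a_{p,m+1}$ says precisely that the normalized sequence $a_{p,m} \big/ \binom{p}{m}$ — equivalently the sequence $m!(p-m)!\,a_{p,m}$ — is log-concave, i.e. the "binomial-normalized" matching polynomial has only real roots. The clean way to get this is: if $\sum_m a_{p,m} x^m$ has only real roots, then so does its \emph{"binomial-weighted"} transform $\sum_m a_{p,m} \binom{p}{m}^{-1} x^m$ up to a known operation. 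Precisely, ultra-log-concavity of order $p$ for a sequence of length $p+1$ follows from real-rootedness of the ordinary generating polynomial, because the operator $E_p$ sending $\sum c_m x^m \mapsto \sum c_m / \binom{p}{m}\, x^m$ preserves real-rootedness on polynomials of degree $\le p$ (this is classical — it is the statement that $\binom{p}{m}$-normalization of a real-rooted polynomial of degree $\le p$ is real-rooted, which follows from repeated application of the fact that multiplier sequences / the operator $x\frac{d}{dx}$ interact well, or directly from Newton's inequalities applied to the polynomial whose coefficients are $c_m \binom{p}{m}$... ). The cleanest citation is Newton's inequality in its sharp form: if $p_0, \dots, p_n$ are the coefficients of a real-rooted polynomial $\sum p_k x^k$ of degree $n$ with all $p_k > 0$, then $p_k^2 \ge \big(1 + \tfrac1k\big)\big(1 + \tfrac1{n-k}\big) p_{k-1} p_{k+1}$. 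Applying this with $p_k = a_{p,k}$, $n = p$ gives exactly the stated inequality. So in fact Step 3 reduces to quoting Newton's inequality together with Step 1.

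\textbf{Summary of the proof.} Invoke Heilmann–Lieb for $C_p$ and $C_p^{(2)}$ to get real-rootedness of $\sum_m a_{p,m} x^m$ and $\sum_m c_{p,m} x^m$; deduce log-concavity and (with positivity from the pendant-edge argument) unimodality; deduce symmetry from the duality $\DD$ of Section~\ref{s: GLmn} (or from palindromy via Proposition~\ref{p: recurrences}); and deduce ultra-log-concavity by applying the sharp form of Newton's inequality to the coefficient sequence of the degree-$p$ real-rooted polynomial. The only genuine subtlety is making sure the hypotheses of Heilmann–Lieb and of Newton's inequality are met — in particular that the relevant polynomials have degree exactly $p$ with all intermediate coefficients strictly positive, which is the pendant-edge observation — and everything else is a routine appeal to named theorems.
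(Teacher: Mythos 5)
Your proposal is correct and follows essentially the same route as the paper, whose proof simply declares symmetry to be clear and cites the general theory of matching polynomials (Heilmann--Lieb) for unimodality and ultra-log-concavity; you have merely spelled out the standard chain real-rootedness $\Rightarrow$ Newton's inequalities $\Rightarrow$ ultra-log-concavity that this citation encapsulates, together with the pendant-edge positivity check. Your careful reading of the duality $\DD$ (the pendant edges of $\DD(\cS)$ are those at $v$-vertices left entirely uncovered by $\cS$, not literally the unchosen pendant edges) is a worthwhile clarification of the paper's phrasing, but it does not change the argument.
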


\begin{proof}
    It is clear that the sequence $(a_{p,m})$ is symmetric. The statements that unimodality and ultra log-concavity of $(a_{p,m})$ are general facts about matching polynomials of graphs \cite{HL}. The same arguments apply to $(c_{p,m})$.
\end{proof}

We give an estimation of $c_{p,q}$.

\begin{lem} \label{l: estimation}
    For a fixed $q \in \N$, $c_{p,q} = O(\frac{1}{q!}p^{2q})$.
\end{lem}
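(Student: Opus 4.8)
The plan is to estimate $c_{p,q}$ --- which by Theorem~\ref{t: main thm Sp} is the number of $q$-matchings in the double corona graph $C_{p}^{(2)}$ --- from above by the total number of $q$-element subsets of the edge set of $C_{p}^{(2)}$, so that the estimate collapses to a single binomial coefficient. The only genuine ingredient is an exact count of the edges of $C_{p}^{(2)}$.

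First I would count the edges. By definition $C_{p}^{(2)}$ is the corona of $K_{p}^{(2)}$ with $K_{1}$. The complete multigraph $K_{p}^{(2)}$ contributes $2\binom{p}{2}=p(p-1)$ ``core'' edges, since each unordered pair $\{v_{i},v_{j}\}$ with $i\neq j$ is joined by exactly two edges; the corona construction then attaches one pendant edge $v_{i}w_{i}$ for each $i$, adding $p$ further edges. Hence $C_{p}^{(2)}$ has exactly $p(p-1)+p=p^{2}$ edges.

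Next, since a $q$-matching is in particular a choice of $q$ pairwise distinct edges, we obtain
$$c_{p,q}\ \le\ \binom{p^{2}}{q}\ =\ \frac{p^{2}(p^{2}-1)\cdots(p^{2}-q+1)}{q!}\ \le\ \frac{1}{q!}\,p^{2q}.$$
For fixed $q$ this is precisely the assertion $c_{p,q}=O\!\left(\tfrac{1}{q!}p^{2q}\right)$, in fact with implied constant $1$.

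I do not expect a real obstacle here: the argument is a pure counting estimate, and the only point requiring slight care is counting the parallel edges of the multigraph $K_{p}^{(2)}$ correctly, which is immediate from the definition. Should one want the sharp asymptotic $c_{p,q}\sim\tfrac{1}{q!}p^{2q}$, the remaining work would be a matching lower bound: among the $\binom{p^{2}}{q}$ choices of $q$ edges, the proportion that fail to form a matching (two selected edges sharing a vertex) is $O(1/p)$ for fixed $q$ by a union bound, so the matchings dominate asymptotically; but this refinement is not needed for the stated lemma.
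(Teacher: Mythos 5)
Your proof is correct and follows essentially the same route as the paper: count the $p^{2}$ edges of $C_{p}^{(2)}$ and bound the number of $q$-matchings by $\binom{p^{2}}{q}\le \frac{1}{q!}p^{2q}$. If anything, your write-up is slightly cleaner, since you observe that the upper bound needs only the trivial inclusion of matchings into edge subsets, whereas the "almost all collections form a matching" remark is relevant only for a matching lower bound.
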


\begin{proof}
    Theorem \ref{t: main thm Sp} implies that $c_{p,q}$ is the number of $q$-matchings in the double corona graph $C^{(2)}_p$. Note that there are $p^2$ edges in $C^{(2)}_p$. When $p$ is sufficiently large, almost all collection of $q$ edges forms a matching. Therefore, 
    $c_{p,q} = O(\binom{p^2}{q})$.
\end{proof}

Assume $\ov{k}$ is algebraically closed. Let $b_{m,n}$ be the number of the Borel orbits of the symmetric variety $SO_{2m+2n+1}/S(O_{2m} \times O_{2n+1})$. Can and U\u{g}urlu \cite{CU} showed that $b_{m,n}$ is a polynomial in $m$ when $n$ is fixed. They conjectured that the polynomial has integral coefficient only when $n = 0$. We confirm their conjecture.

\begin{thm}
    The polynomial $b_{m,n}$ is of degree $2n+1$, whose leading coefficient is non-integral when $n > 0$.     
\end{thm}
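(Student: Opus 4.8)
The plan is to translate the count into a matching problem via Theorem~\ref{t: orthogonal}, extract the degree and leading coefficient by an asymptotic enumeration in the spirit of Lemma~\ref{l: estimation}, and settle non-integrality by a $2$-adic computation.

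\textbf{Step 1 (reduction to matchings).} By Theorem~\ref{t: orthogonal} together with item~(3) of the remarks following it, $b_{m,n}$ is the number of minus-invariant $2m$-matchings in $C_{2m+2n+1}$; applying the duality $\DD$, which commutes with the minus involution (it only flips pendant edges), $b_{m,n}$ equals also the number of minus-invariant $(2n+1)$-matchings in $C_{2m+2n+1}$. I would work with the latter, since then the number of edges is fixed while the graph grows with $m$.

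\textbf{Step 2 (asymptotics and leading coefficient).} In a minus-invariant matching, each minus-orbit of edges is either a single fixed edge --- the central pendant $v_0w_0$ or a horizontal edge $v_iv_{-i}$ --- or a pair $\{e,-e\}$; since $v_0$ can never be incident to a complete-graph edge (its image under the involution would share $v_0$), such a pair is either a \emph{pendant pair} $\{v_iw_i,\,v_{-i}w_{-i}\}$ or a \emph{cross pair} on two positive indices $a\ne b$, of which there are exactly two, namely $\{v_av_b,\,v_{-a}v_{-b}\}$ and $\{v_av_{-b},\,v_{-a}v_b\}$. Writing a minus-invariant $(2n+1)$-matching with $c_0\le 1$ central, $h$ horizontal, $p$ pendant-pair and $k$ cross-pair orbits, we have $c_0+h+2p+2k=2n+1$, and (up to lower-order terms, the chosen index sets being disjoint) the number of such configurations is
\[
\binom{m+n}{h}\binom{m+n}{p}\binom{m+n}{2k}(2k-1)!!\,2^{k}\ \sim\ \frac{m^{\,h+p+2k}}{h!\,p!\,k!}\qquad(m\to\infty),
\]
with one further bounded choice for the central edge. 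As $h+p+2k=(2n+1)-c_0-p\le 2n+1$, with equality iff $c_0=p=0$, only $h=2j+1,\ k=n-j$ contribute to top order, each with asymptotic count $m^{2n+1}/\big((2j+1)!\,(n-j)!\big)$. Hence $b_{m,n}$ is a polynomial in $m$ (re-deriving the polynomiality of Can--U\u{g}urlu) of degree $2n+1$, with leading coefficient
\[
\lambda_n\ :=\ \sum_{j=0}^{n}\frac{1}{(2j+1)!\,(n-j)!}.
\]

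\textbf{Step 3 (non-integrality).} For $n\ge 1$ I would show $v_2(\lambda_n)<0$. Set $t_j=\tfrac{1}{(2j+1)!\,(n-j)!}$. From $v_2(r!)=r-s_2(r)$ (binary digit sum) and $s_2(2j+1)=s_2(j)+1$ we get $v_2(t_j)=-(n+j)+s_2(j)+s_2(n-j)$, hence
\[
v_2(t_j)-v_2(t_n)=(n-j)+\big(s_2(j)+s_2(n-j)-s_2(n)\big)\ \ge\ n-j\ >\ 0\quad\text{for }0\le j<n,
\]
using the subadditivity $s_2(j)+s_2(n-j)\ge s_2(n)$. Thus $t_n$ is the unique term of minimal valuation and $v_2(\lambda_n)=v_2(t_n)=s_2(n)-2n<0$, so $\lambda_n\notin\mathbb Z$ for $n\ge 1$. (At $n=0$, $\lambda_0=1$, matching the statement.)

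The main obstacle is Step~2: one must check carefully that the cross-pair enumeration really contributes the factor $1/k!$ after the $(2k-1)!!\,2^{k}$ collapses the multinomial, and that configurations using the central edge or pendant pairs strictly lower the degree. Step~3 is short once the digit-sum subadditivity is invoked; if one wished to avoid the $2$-adic route, taking the prime $q=2n+1$ (when it is prime) gives $v_q(\lambda_n)=-1$ at once, but the $2$-adic argument has the merit of being uniform in $n$.
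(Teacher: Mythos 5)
Your proposal is correct, and in its first two steps it follows essentially the same route as the paper: reduce via Theorem~\ref{t: orthogonal} to counting minus-invariant $(2n+1)$-matchings in $C_{2m+2n+1}$, then extract the top-degree term $m^{2n+1}\sum_{j=0}^{n}\tfrac{1}{(2j+1)!\,(n-j)!}$ by noting that for large $m$ almost every admissible collection of edge-orbits is a matching. Your stratification by orbit type (central edge, horizontal edges, pendant pairs, cross pairs) is a finer version of the paper's split by the presence of $e_0$ and the number of horizontal edges, with your direct cross-pair count $\binom{m+n}{2k}(2k-1)!!\,2^{k}\sim (m+n)^{2k}/k!$ playing the role of the paper's appeal to Lemma~\ref{l: estimation} for $c_{p,q}$; both yield the same leading coefficient $\lambda_n$. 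Where you genuinely diverge is the non-integrality step: the paper argues that the sum lies strictly between $0$ and $1$ for $n\ge 2$ (which silently leaves the case $n=1$, where the sum equals $7/6$, to the reader), whereas your $2$-adic computation $v_2(\lambda_n)=s_2(n)-2n<0$ is uniform in $n\ge 1$ and shows $\lambda_n$ is not even a $2$-adic integer --- a cleaner and more robust finish. One small caveat: the asymptotic count alone pins down the degree and leading coefficient only once polynomiality of $b_{m,n}$ in $m$ is known (the paper takes this from Can--U\u{g}urlu); your parenthetical claim to ``re-derive'' polynomiality would require the exact inclusion--exclusion over overlapping index sets, which you do not carry out.
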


\begin{proof}
    By Theorem \ref{t: orthogonal}, it suffices to show that the statement for the number of minus-invariant $2n+1$-matchings in the corona graph $C_{2m+2n+1}$. When $m$ is sufficiently large, almost all collection of $2n+1$ edges forms a matching. Therefore, it suffices to count the number of minus-invariant $2n+1$ edges subsets in $C_{2m+2n+1}$. 

    There are two types of minus-invariant $2n+1$-edges subset in $C_{2m+2n+1}$, distinguished by whether they include the edge $e_0$ connecting $v_0$ and $w_0$.

    Suppose the subset contains $e_0$ and exactly $2l$ horizontal edges in the subset, with $l \le n$. Then the number of such subsets is $\binom{m+n}{2l}c_{m+n-2l,n-l}$. Therefore, there are $$ O \left(\sum_{l=0}^{n}\binom{m+n}{2l}\frac{1}{(n-l)!}(m+n-2l)^{2(n-l)} \right) $$ subset containing $e_0$ by Lemma \ref{l: estimation}. Note that the top degree of the expression is $m^{2n}$.  

    Suppose the subset contains exactly $2l+1$ horizontal edges and does not contain $e_0$, with $l \le n$. Then the number of such subsets is $\binom{m+n}{2l+1}c_{m+n-2l-1,n-l}$. Therefore, there are $$ O \left(\sum_{l=0}^{n}\binom{m+n}{2l+1}\frac{1}{(n-l)!}(m+n-2l-1)^{2(n-l)} \right) $$ subset not containing $e_0$. Note that the top degree of the expression is $m^{2n+1}$.

    Therefore, we know that $$b_{m,n} = O\left(m^{2n+1}\sum_{l=0}^{n}\frac{1}{(2l+1)!(n-l)!}\right).$$ 

    It is not hard to see $$\sum_{l=0}^{n}\frac{1}{(2l+1)!(n-l)!}$$ is smaller than $1$ when $n \ge 2$. In fact, this expression decreases exponentially when $n$ goes to infinity. In particular, the leading coefficient of $b_{m,n}$ is not an integer.
\end{proof}

\bibliographystyle{amsplain}
\bibliography{reference}

\end{document}